\setlist[itemize]{leftmargin=2.5em}
\setlist[enumerate]{font={\upshape}, label=\arabic*., leftmargin=2.5em}
\newtheorem{lemma}{Lemma}[section]
\newtheorem{claim}[lemma]{Claim}
\Crefname{claim}{Claim}{Claims}
\newtheorem{theorem}[lemma]{Theorem}
\newtheorem{corollary}[lemma]{Corollary}
\newtheorem{conjecture}[lemma]{Conjecture}
\newtheorem{proposition}[lemma]{Proposition}
\let\expandafter\oldproof\csname\string\proof\endcsname
\let\oldendproof\endproof
\renewenvironment{proof}[1][\proofname]{%
	\oldproof[\normalfont\bfseries #1]%
}{\oldendproof}
\newenvironment{subproof}[1][\normalfont\it\proofname]{%
	\begin{proof}[#1]%
	}{%
	\end{proof}%
}
\newcommand{\dd}{\textquotedblleft}
\newcommand{\ee}{\textquotedblright}
\newcommand{\vep}{\varepsilon}
\newcommand{\nin}{\notin}
\renewcommand{\subset}{\subseteq}
\renewcommand{\supset}{\supseteq}
\newcommand{\ind}{\operatorname{ind}}
\newcommand{\rdl}{R\"{o}dl}
\newcommand{\erh}{Erd\H{o}s--Hajnal}
\DeclarePairedDelimiter\abs{\lvert}{\rvert}%
\DeclarePairedDelimiter\ceil{\lceil}{\rceil}%
\DeclarePairedDelimiter\floor{\lfloor}{\rfloor}%
\newcommand{\leqnomode}{\tagsleft@true}
\newcommand{\reqnomode}{\tagsleft@false}
\begin{document}	
	\title{A further extension of R\"{o}dl's theorem}
	\author{Tung H. Nguyen}
	\address{Princeton University, Princeton, NJ 08544, USA}
	\email{\href{mailto:tunghn@math.princeton.edu}
		{tunghn@math.princeton.edu}}
	\thanks{Partially supported by AFOSR grants A9550-10-1-0187 and FA9550-22-1-0234, and NSF grant DMS-2154169.}
	\begin{abstract}
		Fix $\vep>0$ and a graph $H$ with at least one vertex.
		A well-known theorem of \rdl{} from the 80s says that every graph $G$ with no induced copy of $H$ contains a linear-sized {\em $\vep$-restricted} set $S\subset V(G)$,
		which means $S$ induces a subgraph with maximum degree at most $\vep\abs{S}$ in $G$ or its complement.
		There are two extensions of this result:
		\begin{itemize}
			\item quantitatively, Nikiforov relaxed the condition \dd no induced copy of $H$\ee{} to \dd at most $\kappa\abs{G}^{\abs{H}}$ induced copies of $H$ for some $\kappa>0$ depending on $H$ and $\vep$;\ee{} and
			
			\item qualitatively, Chudnovsky, Scott, Seymour, and Spirkl recently showed that there exists $N>0$ depending on $H$ and $\vep$ such that $G$ is {\em $(N,\vep)$-restricted},
			which means $V(G)$ has a partition into at most $N$ subsets that are $\vep$-restricted.
		\end{itemize}
		
		A natural common generalization of these two asserts that every graph $G$ with at most $\kappa\abs{G}^{\abs{H}}$ induced copies of $H$ is $(N,\vep)$-restricted for some $\kappa,N>0$ depending on $H$ and $\vep$.
		This is unfortunately false;
		but we prove that for every $\vep>0$, $\kappa$ and $N$ still exist so that for every~$d\ge0$, every graph $G$ with at most $\kappa d^{\abs{H}}$ induced copies of $H$
		has an $(N,\vep)$-restricted induced subgraph on at least $\abs G-d$ vertices.
		This unifies the two aforementioned~theorems,
		and is optimal up to $\kappa$ and $N$ for every value of $d$.
	\end{abstract}
	\maketitle
	\section{Introduction}
	Graphs in this paper are finite and simple.
	For a graph $G$ with vertex set $V(G)$ and edge set $E(G)$,
	let $\abs G:=\abs{V(G)}$, and
	let $\overline G$ denote its complement.
	For $S\subset V(G)$,
	let $G[S]$ denote the subgraph of $G$ induced~by~$S$,
	and let $G\setminus S:=G[V(G)\setminus S]$.
	For a nonnull graph $H$,
	a \emph{copy} of $H$ in $G$ is a graph isomorphism from $H$ to $G[S]$ for some $S\subset V(G)$.
	Let $\ind_H(G)$ be the number of copies of $H$ in $G$;
	and say that $G$ is {\em $H$-free} if $\ind_H(G)=0$.
	Given $\vep>0$, a subset $S\subset V(G)$ is {\em $\vep$-restricted} in $G$ if one of $G[S],\overline{G}[S]$ has maximum degree at most $\vep\abs{S}$.
	The following well-known theorem of \rdl{}~\cite{rodl1986} from 1986 has become a standard tool in the investigation of the \erh{} conjecture\footnote{The very last sentence of~\cite{MR599767} was actually the first time Erd\H{o}s and Hajnal formally stated their well-known conjecture.}~\cite{MR1031262,MR599767}
	(see~\cite{MR3150572} for~a~survey).
	\begin{theorem}
		[\cite{rodl1986}]
		\label{thm:rodl}
		For every $\vep>0$ and every graph $H$, there exists $\delta=\delta(H,\vep)>0$ such that for every $H$-free graph $G$, there is an $\vep$-restricted $S\subset V(G)$ in $G$ with $\abs S\ge\delta\abs{G}$.
	\end{theorem}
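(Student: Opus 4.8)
My plan is to deduce \Cref{thm:rodl} from the following strengthening, call it $(\star_h)$: for every $\vep>0$ and integer $h\ge1$ there is $\delta=\delta(h,\vep)>0$ such that every graph $G$ in which no $\vep$-restricted set has size at least $\delta\abs{G}$ contains an induced copy of every graph on $h$ vertices. This is equivalent to \Cref{thm:rodl} --- an $H$-free graph has no induced copy of $H$, so $(\star_{\abs{H}})$ supplies an $\vep$-restricted set of size at least $\delta\abs{G}$.

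I would prove $(\star_h)$ by induction on $h$ (with, inside one subcase, a further appeal to the statement for graphs on fewer vertices); the case $h\le1$ is immediate, and for the same reason one may assume $\abs{G}$ is large. The engine is Szemer\'edi's regularity lemma. Given $G$ with no $\vep$-restricted set of size $\ge\delta\abs{G}$, apply it with a minuscule parameter $\eta$ and a large prescribed lower bound $m_0$ on the number of parts, obtaining an $\eta$-regular equipartition into clusters $V_1,\dots,V_k$ with $m_0\le k\le M_0(\eta)$. Two facts drive the argument. First, each $G[V_i]$, and indeed every linearly large subset of it, is \emph{rich}, that is, contains an induced copy of every $(h-1)$-vertex graph: since $\abs{V_i}\ge\abs{G}/(2M_0)$, an $(\vep/2)$-restricted set of size $\ge\delta'\abs{V_i}$ inside $G[V_i]$ --- where $\delta'=\delta(h-1,\vep/2)$ --- would be an $\vep$-restricted set of size $\ge\delta\abs{G}$ in $G$ once $\delta\le\delta'/(2M_0)$, so the inductive hypothesis $(\star_{h-1})$ forces richness. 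Second, an induced counting (embedding) lemma: if $h$ clusters are pairwise $\eta$-regular with every pairwise density bounded away from $0$ and $1$, then for $\eta$ small enough one can choose one vertex from each so as to realise \emph{any} prescribed adjacency pattern, hence any $h$-vertex graph.

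Now I dichotomise on whether some pair $(V_i,V_j)$ is $\eta$-regular with intermediate density. If so, fix $x\in V(H)$, put $A:=N_H(x)$, and pick a typical $v\in V_j$: then $P:=N(v)\cap V_i$ and $Q:=V_i\setminus P$ are both linearly large, and it suffices to find an induced copy of $H\setminus x$ with the $A$-vertices in $P$ and the rest in $Q$, since adding $v$ then produces the induced $H$. This ``rooted'' sub-problem is an instance of the inductive hypothesis at $h-1$ for the partitioned pair $(G[P\cup Q],(P,Q))$, so I would in fact carry a \emph{partitioned} strengthening of $(\star_h)$ through the induction (with the number of parts bounded in terms of $\abs{H}$): it returns either the rooted copy, or a large $\vep$-restricted set inside $P$ or $Q$, which lifts to one in $G$. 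If instead \emph{no} regular pair has intermediate density, pass to the reduced graph $R$ on the clusters, with $i,j$ adjacent exactly when $(V_i,V_j)$ is $\eta$-regular and dense; since $\abs{R}=k\le M_0(\eta)<\abs{G}$, the auxiliary induction applies to $R$ with a small parameter $\vep^{*}$. Either $R$ contains an induced copy of $H$, which lifts through the (regular, nearly complete or nearly empty) pairs to an induced copy in $G$; or $R$ has a large $\vep^{*}$-restricted set $S_R$, and then $\bigcup_{i\in S_R}V_i$ --- after deleting from each cluster the few vertices with atypically many neighbours across the sparse pairs, and using that $k\ge m_0$ is large --- is an $\vep$-restricted set of size $\ge\delta\abs{G}$.

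The crux is exactly the step the counting lemma handles: forcing an \emph{arbitrary} graph $H$ rather than merely a clique or an independent set, which is all that Ramsey's theorem alone extracts from a regular partition --- this is what the intermediate-density pairs and the recursive rooted structure buy. The remaining nuisance is the hierarchy of constants: $\eta$ must be tiny relative to the density cutoff, the embedding bounds, and $\delta$ itself, while $\delta$ may depend only on $H$ and $\vep$. This is precisely why the primary induction is on $\abs{H}$, where each step loses only a bounded factor, and the induction on the number of vertices enters only through the reduced graph, whose order is bounded by $M_0(\eta)$.
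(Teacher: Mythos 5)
Your approach is genuinely different from the paper's. The paper never re-proves \Cref{thm:rodl} directly; it proves the stronger \Cref{thm:niki} in Section 2 via a regularity-free counting argument in the style of Fox and Sudakov (\Cref{lem:counting}, \Cref{lem:niki}), precisely to avoid the tower-type constants the regularity lemma forces. You go back to R\"odl's original regularity route. That route can be made to work, but your sketch as written has a concrete circularity in the ``no intermediate-density pair'' branch. There you apply $(\star_h)$ to the reduced graph $R$ at a parameter $\vep^*$. For the lifted set $\bigcup_{i\in S_R}V_i$ to be $\vep$-restricted in $G$, the parameters must roughly satisfy $\vep^*+c+O(\eta)+O(1/k)\le\vep$ (density cutoff $c$, regularity error, within-cluster edges), which forces $\vep^*<\vep$; so $\delta(h,\vep)$ ends up controlled by $\delta(h,\vep^*)$ at a strictly smaller second argument, and that recursion in $\vep$ never bottoms out. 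The secondary induction on $\abs G$ does not rescue this, because $\delta$ must be uniform in $\abs G$. Similarly, the rooted step in the intermediate-density branch leans on a ``partitioned strengthening'' of $(\star_{h-1})$ that is announced but not formulated, and it is doing real work: applying $(\star_{h-1})$ to $G[P]$ and to $G[Q]$ separately does not produce a copy of $H\setminus x$ with the $N_H(x)$-image in $P$ and the complementary image in~$Q$.

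Both gaps can be closed by replacing your recursion with a single Ramsey argument on the reduced graph, which is closer to R\"odl's original structure. Since at most $\eta\binom{k}{2}$ pairs are irregular, first pass to a subfamily of about $1/(2\eta)$ clusters that are pairwise $\eta$-regular, then three-colour each pair as sparse, intermediate, or dense. Choosing $m_0$ (hence the subfamily) larger than the three-colour Ramsey number guaranteeing a clique of size $h$ in the intermediate colour or of size $\ceil{C/\vep}$ in either extreme colour gives the dichotomy outright: in the first case, $h$ pairwise intermediate-regular clusters, together with the richness of each cluster from $(\star_{h-1})$, let the induced counting lemma (cf.\ \cite[Lemma~3.2]{MR1804820}, or compare \Cref{cor:copies}) embed $H$ with no appeal to a partitioned sub-problem; in the second, the union of $\ceil{C/\vep}$ pairwise sparse (or pairwise dense) clusters, after deleting a few atypical vertices, is $\vep$-restricted of size at least $\ceil{C/\vep}\cdot\abs G/(2M_0(\eta))$. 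Since $k\le M_0(\eta)$ is bounded, this yields a uniform $\delta$, and the only induction left is the outer one on $h$. Even so, you should expect $\delta^{-1}$ of tower type in $\vep^{-1}$; if you care about the bound, the paper's Section 2 is the better model.
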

	Since its inception, \cref{thm:rodl} has found many extensions.
	Among these is the following useful quantitative improvement first proved by Nikiforov~\cite{nikiforov2006}
	(see~\cite{MR4563865,nss2023a,nss2023b} for several applications).
	\begin{theorem}
		[\cite{nikiforov2006}]
		\label{thm:niki}
		For every $\vep>0$ and every graph $H$, there exist $\delta=\delta_{\ref{thm:niki}}(H,\vep)>0$
		and $\kappa=\kappa_{\ref{thm:niki}}(H,\vep)>0$ such that for every graph $G$ with $\ind_H(G)\le\kappa\abs{G}^{\abs H}$,
		there is an $\vep$-restricted $S\subset V(G)$ in $G$ with $\abs S\ge\delta\abs G$.
	\end{theorem}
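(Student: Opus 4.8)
The plan is to derive \cref{thm:niki} from \cref{thm:rodl} by the standard \emph{removal-lemma reduction}: after editing a sub-quadratic number of edges, a graph with few induced copies of $H$ becomes genuinely induced-$H$-free, and the property ``contains a large $\vep$-restricted set'' is robust under such editing. Write $h:=\abs H$, and note that we may assume $n:=\abs G$ is large, since otherwise a single vertex already gives an $\vep$-restricted set of size at least $\delta n$ (as soon as $\delta\le 1/n$).

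First I would fix the parameters in the right order. Let $\delta_0:=\delta_{\ref{thm:rodl}}(H,\vep/4)$ be the constant supplied by \cref{thm:rodl} for the graph $H$ and error $\vep/4$; set $\eta:=\vep\delta_0^2/16$; and let $\kappa:=\kappa(H,\eta)$ be the constant from the induced graph removal lemma of Alon, Fischer, Krivelevich, and Szegedy --- every graph on $m$ vertices with at most $\kappa m^h$ induced copies of $H$ can be made induced-$H$-free by adding and deleting at most $\eta m^2$ edges. (This lemma rests on the strong regularity lemma and is independent of \cref{thm:rodl}, so the argument is not circular.) The theorem will then be proved with this $\kappa$ and with $\delta:=\delta_0/2$.

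Now let $G$ satisfy $\ind_H(G)\le\kappa n^h$. The removal lemma produces a graph $G'$ on $V(G)$ that is induced-$H$-free with $\abs{E(G)\triangle E(G')}\le\eta n^2$, and \cref{thm:rodl} applied to $G'$ with error $\vep/4$ yields $S\subset V(G)$, $\abs S\ge\delta_0 n$, with $G'[S]$ or $\overline{G'}[S]$ of maximum degree at most $(\vep/4)\abs S$; since $\abs{E(\overline G)\triangle E(\overline{G'})}=\abs{E(G)\triangle E(G')}$, running the rest of the argument with $\overline G,\overline{G'}$ in place of $G,G'$ in the second case lets us assume the first. So $G'[S]$ has maximum degree at most $(\vep/4)\abs S$, and the one substantive step is to transfer this to $G$. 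Let $B\subset S$ be the set of vertices incident to at least $(\vep/4)\abs S$ edges of $E(G)\triangle E(G')$ with both ends in $S$; double counting gives $\abs B\cdot(\vep/4)\abs S\le 2\eta n^2$, so $\abs B\le 8\eta n^2/(\vep\abs S)\le 8\eta n/(\vep\delta_0)=\delta_0 n/2$ by the choice of $\eta$. Set $S':=S\setminus B$, so that $\abs{S'}\ge\delta_0 n-\delta_0 n/2=\delta n$, and in particular $\abs{S'}\ge\abs S/2$. For every $v\in S'$,
\[
\deg_{G[S']}(v)\ \le\ \deg_{G'[S]}(v)+\bigl(\text{edits of }E(G)\triangle E(G')\text{ at }v\text{ with both ends in }S\bigr)\ \le\ \tfrac{\vep}{4}\abs S+\tfrac{\vep}{4}\abs S\ \le\ \vep\abs{S'},
\]
using $\deg_{G'[S']}(v)\le\deg_{G'[S]}(v)\le(\vep/4)\abs S$, the fact that $v\notin B$, and $\abs{S'}\ge\abs S/2$. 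Hence $S'$ is $\vep$-restricted in $G$ with $\abs{S'}\ge\delta\abs G$, as desired.

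There is no single hard step here; the content lies in the reduction, and the point that needs care is that $\vep$-restrictedness is a \emph{maximum-degree} (hence local) condition, so the global bound $\abs{E(G)\triangle E(G')}\le\eta n^2$ must be ``spent'' on the few high-discrepancy vertices of $S$ --- which is why one deletes $B$, and why $S'$ remains a linear fraction of $\abs G$, precisely because $S$ was. If one wanted a proof that does not rest on the removal lemma as a black box, the work would instead move into a direct regularity (or counting) argument on $G$, producing either a large almost-homogeneous set or a large ``blow-up''-like structure forcing $\ind_H(G)>\kappa n^h$; that is the more hands-on route, but the reduction above is the quickest way to the stated bound.
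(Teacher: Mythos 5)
Your proof is correct, but it takes a genuinely different route from the paper's. You reduce \cref{thm:niki} to \cref{thm:rodl} via the induced removal lemma, cleaning up the discrepancy between $G$ and the edited $H$-free graph $G'$ by deleting the few vertices of $S$ incident to too many edits; the double-counting and the final max-degree bound both check out, and the choice $\eta=\vep\delta_0^2/16$ is exactly what the arithmetic needs. The paper, by contrast, gives in Section 2 a self-contained proof in the Fox--Sudakov style: a counting lemma (\cref{lem:counting}) shows that if no almost-sparse/almost-dense pair of sets exists then there are many copies of $H$, \cref{cor:counting} turns this into a dichotomy, and \cref{lem:niki} iterates the dichotomy through the function $\beta(H,\kappa,\vep_1,\vep_2)$ to amplify a weakly tight pair into a linear-size weakly $\vep$-restricted set. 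The trade-off is the one you already flag at the end: your reduction is short and conceptually transparent, but the induced removal lemma rests on the strong regularity lemma and so the resulting $\delta^{-1},\kappa^{-1}$ are of iterated-tower type, whereas the paper's argument avoids regularity altogether and yields quasi-polynomial bounds $2^{-c\log^2(\vep^{-1})}$. The paper in fact acknowledges your route in a remark following \cref{thm:copies} (``Theorem 1.6 is related to the induced removal lemma, which also implies Theorem 1.2''), but needs the counting-based proof for two reasons: the bounds, and the fact that \cref{lem:counting} and its corollaries are reused verbatim inside the proof of the main \cref{lem:main}, whereas the removal lemma would not slot into that argument.

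One small presentational point: your opening remark about assuming $n$ large is not really needed --- \cref{thm:rodl} is valid for all $n$, and the bookkeeping goes through once $\delta_0 n\ge 1$; for $\delta_0 n<1$ a single vertex trivially works. As written it reads slightly as though the small-$n$ case requires separate thought, when in fact it is vacuous.
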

	\rdl's original proof of \cref{thm:rodl} and Nikiforov's proof of \cref{thm:niki} (we remark that \cref{thm:niki} is already implicit in~\cite{rodl1986}) both employ the regularity lemma, and so give bounds on $\delta^{-1}$ and $\kappa^{-1}$ which are towers of twos of height polynomial in $\vep^{-1}$ with constants depending on $H$.
	Fox and Sudakov~\cite{fox2008} offered an alternative proof of \cref{thm:niki}
	showing that both $\delta$ and $\kappa$ can be chosen as $2^{-c\log^2(\vep^{-1})}$
	for some constant $c>0$ depending on $H$;
	and very recently Buci\'c, Nguyen, Scott, and Seymour~\cite{bnss2023} improved this to $2^{-c\log^2(\vep^{-1})/\log\log(\vep^{-1})}$.
	In~\cite{nss2023a}, it is conjectured that both $\delta$ and $\kappa$ can in fact be taken to be a polynomial of $\vep$ in \cref{thm:niki}, which would imply the \erh{} conjecture itself
	(see~\cite{fnss2023,nss2023a,nss2023b} for current progress on this~topic).
	
	Recently, Chudnovsky, Scott, Seymour, and Spirkl~\cite{chudnovsky2021-1} provided a qualitative refinement of \cref{thm:rodl},
	which says that the vertex set of every $H$-free graph can even be partitioned into a bounded number of $\vep$-restricted subsets.
	Formally, for $\vep,N>0$,
	a graph $G$ is {\em $(N,\vep)$-restricted} if there is a partition of $V(G)$ into at most $N$ subsets that are $\vep$-restricted in $G$;
	thus $G$ is $(N,\vep)$-restricted if and only if $\overline G$ is.
	\begin{theorem}
		[\cite{chudnovsky2021-1}]
		\label{thm:csss}
		For every $\vep>0$ and every graph $H$,
		there exists $N=N(H,\vep)>0$ such that every $H$-free graph is $(N,\vep)$-restricted.
	\end{theorem}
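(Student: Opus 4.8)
The naive approach --- iterate \cref{thm:rodl}, peeling off a linear-sized $\vep$-restricted set at each stage --- only gives a partition of $V(G)$ into $O_{H,\vep}(\log\abs{G})$ parts, since each removal shrinks the graph by at most a constant factor. To bring the number of parts down to a constant one must use $H$-freeness structurally, and I would first draw intuition from the smallest interesting case $H=P_3$, where $H$-free graphs are exactly disjoint unions of cliques. There an $\vep$-restricted set $S$ is either concentrated (at least $(1-\vep)\abs{S}$ of it) in one clique --- the ``dense'' case --- or meets every clique in at most $\vep\abs{S}+1$ vertices --- the ``sparse'' case; and the right partition is: call a clique \emph{big} if it has more than $\vep\abs{G}$ vertices (there are fewer than $\vep^{-1}$ of those), give each big clique its own dense part, and put everything else into a \emph{single} sparse part $S$ that also borrows $\approx\vep\abs{S}$ vertices from each big clique, the big cliques serving as ``ballast'' that inflates $\abs{S}$ until it can absorb all the small cliques. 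This uses at most $\vep^{-1}+1$ parts, and the crucial feature is that the recursion terminates not because few \emph{vertices} are left but because few \emph{cliques} are left.

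For a general $H$ the plan is to find the right analogue of this dichotomy, organised as an induction on $h:=\abs{V(H)}$ (carried out for $H$ and $\overline H$ simultaneously, which is harmless as ``$H$-free'' and ``$\overline H$-free'' are complementary and being $(N,\vep)$-restricted is self-complementary). The case $h\le 1$ is trivial; for $h\ge 2$ assume $N(H',\vep')$ exists for every $H'$ with $\abs{V(H')}<h$. Two ingredients feed the step. First, a neighbourhood reduction: if $H$ has a dominating vertex $d$ then $G[N(u)]$ is $(H-d)$-free for every vertex $u$ of an $H$-free $G$ (a copy of $H-d$ inside $N(u)$ together with $u$ would be an induced $H$), and dually $G[\overline{N}(u)]$ is $(H-z)$-free when $z$ is an isolated vertex of $H$; by the inductive hypothesis such neighbourhoods are boundedly restricted, and for a general $H$ one reduces to this situation by a Ramsey-type argument, classifying each vertex $u$ by which members of a finite list of small graphs embed in $G[N(u)]$ and in $G[\overline{N}(u)]$ and passing to a large monochromatic subset. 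Second, a ``core plus residual'' step: use \cref{thm:rodl} (or, in the setting of the main theorem, \cref{thm:niki}) to pull out a large $\vep$-restricted core $A$, and then show --- the generalisation of the ``big cliques'' computation above --- that $V(G)\setminus A$ is covered by a \emph{bounded} number of pieces each inducing an $(H-v)$-free graph for a suitable vertex $v$, which the inductive hypothesis then splits into boundedly many $\vep$-restricted sets.

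The real obstacle is precisely that last step: controlling the \emph{number} of residual pieces. Peeling pieces off one at a time needs $\Theta(\log\abs{G})$ of them, whereas the $P_3$ case shows the correct argument is a sharp dichotomy in which one Rödl-type set (inflated by ballast) plus a constant number of exceptional pieces already exhausts the graph. I expect the bulk of the work to be in isolating the right notion of ``exceptional piece'' for an arbitrary excluded graph, proving that $H$-freeness forces only constantly many of them, and verifying that each recurses into a strictly simpler instance --- so that the induction on $\abs{V(H)}$, rather than on $\abs{G}$, actually closes.
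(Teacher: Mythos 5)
Your proposal is not a proof --- you acknowledge this yourself in the last paragraph --- and its central step does not match, and I do not think can replace, the argument in the paper. The paper obtains \cref{thm:csss} as the $d=0$ case of \cref{thm:copies}, whose engine is \cref{lem:main}: one maintains a partition of $V(G)$ into ``rows,'' the key ingredient being a partial $(c,\xi)$-blowup $\{D_1,\ldots,D_t\}$ of $H[\{v_1,\ldots,v_t\}]$, and iteratively tries to extend it to a blowup of all of $H$ via the fullness-extraction lemma (\cref{lem:full}, from Peng--R\"odl--Ruci\'nski). If the extension stalls at some $t<h$, the ``exceptional'' set is small and the remaining pieces rearrange into the desired structured partition; if it reaches $t=h$, a counting lemma (\cref{cor:copies}) produces too many copies of $H$. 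This is then fed into the path-partition machinery (\cref{lem:partition}, from~\cite{chudnovsky2021-1}) and a lengthening step (\cref{lem:key}). At no point is there an induction on $\abs H$.

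Your plan is an induction on $\abs{V(H)}$ hinging on two things: a neighbourhood-reduction step (passing from $H$ to $H-v$ inside $G[N(u)]$ or $G[\overline N(u)]$) and a ``core plus residual'' step covering $V(G)\setminus A$ by boundedly many $(H-v)$-free pieces. The genuine gap is the latter. Your $P_3$ template works because $P_3$-free graphs are literally disjoint unions of cliques, so the residual ``big cliques'' are handed to you by the structure theorem; for a general $H$ there is no analogous structural decomposition, and $H$-freeness gives no reason that the complement of a R\"odl set should split into finitely many pieces each missing a smaller induced subgraph --- nor even into pieces whose number you can control. The neighbourhood-reduction step likewise only applies verbatim when $H$ has a dominating or isolated vertex, and your fallback (a ``Ramsey-type argument, classifying each vertex by which small graphs embed in its neighbourhood'') is a wish rather than an argument: after such a classification you have no bound on the number of classes into which you must recurse, which is precisely the obstruction you identified. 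The paper avoids this whole issue by never recursing on $H$: it builds a blowup of $H$ directly and wins either way (blowup completes $\Rightarrow$ copies of $H$; blowup stalls $\Rightarrow$ the partition), and I do not see how your induction on $\abs{V(H)}$ closes without an idea that supplies the missing uniform bound on residual pieces.
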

	The \emph{edge density} of a graph $G$ equals $\abs{E(G)}/{\abs G\choose2}$ if $\abs G\ge2$ and equals $0$ if $\abs G\le1$.
	For $\vep>0$,
	a subset $S\subset V(G)$ is {\em weakly $\vep$-restricted} in $G$ if one of $G[S],\overline{G}[S]$ has edge density at most $\vep$.
	Thus if $S$ is $\frac{1}{2}\vep$-restricted in $G$ then it is weakly $\vep$-restricted;
	and if $S$ is weakly $\frac{1}{4}\vep$-restricted in $G$ then it has an $\vep$-restricted subset of size $\ceil{\frac{1}{2}\abs S}$.
	Hence the strength of \cref{thm:rodl,thm:niki} remain unaffected if \dd$\vep$-restricted\ee{} is replaced by \dd weakly $\vep$-restricted.\ee{}
	As discussed in~\cite{chudnovsky2021-1}, however, \cref{thm:csss} becomes significantly weaker if \dd$(N,\vep)$-restricted\ee{} is replaced by \dd{\em weakly $(N,\vep)$-restricted},\ee{}
	which means $V(G)$  has a partition into at most $N$ subsets that are weakly $\vep$-restricted in $G$.
	Indeed, repeated applications of \cref{thm:niki} yield the~following result proved in~\cite{nikiforov2006}.
	\begin{theorem}
		[\cite{nikiforov2006}]
		\label{thm:weak}
		For every $\vep>0$ and every graph $H$, there exist $\kappa=\kappa(H,\vep)>0$ and $N=N(H,\vep)>0$ such that every graph $G$ with $\ind_H(G)\le\kappa\abs{G}^{\abs H}$
		is weakly $(N,\vep)$-restricted. 
	\end{theorem}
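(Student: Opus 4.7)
The plan is to iterate Theorem~\ref{thm:niki} with parameter $\vep/2$ to peel off weakly $\vep$-restricted subsets, using the observation preceding Theorem~\ref{thm:weak} that any $\vep/2$-restricted set is weakly $\vep$-restricted. Set $\delta_0:=\delta_{\ref{thm:niki}}(H,\vep/2)$ and $\kappa_0:=\kappa_{\ref{thm:niki}}(H,\vep/2)$, and choose $\kappa>0$ small in terms of $H$ and $\vep$ to be determined.

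Starting from $G_0:=G$, while the hypothesis $\ind_H(G_i)\le\kappa_0|G_i|^{\abs H}$ holds, I apply Theorem~\ref{thm:niki} to $G_i$ to extract an $\vep/2$-restricted subset $V_{i+1}\subset V(G_i)$ of size at least $\delta_0|G_i|$, and set $G_{i+1}:=G_i\setminus V_{i+1}$. Each $V_i$ becomes one part of the desired partition. Since $\ind_H(G_i)\le\ind_H(G)\le\kappa\abs G^{\abs H}$, the hypothesis persists as long as $|G_i|\ge(\kappa/\kappa_0)^{1/\abs H}\abs G$; since each iteration shrinks $|G_i|$ by at least a factor $1-\delta_0$, the iteration terminates within a constant number $K=K(H,\vep)$ of steps, yielding at most $K$ weakly $\vep$-restricted parts.

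The main obstacle is partitioning the leftover $R:=G_K$ into boundedly many weakly $\vep$-restricted subsets. Direct reapplication of Theorem~\ref{thm:niki} fails because, although $\ind_H(R)\le\kappa\abs G^{\abs H}$, the ratio $\ind_H(R)/|R|^{\abs H}$ may exceed $\kappa_0$. I would address this by induction on $\abs G$: arranging $\kappa$ so small that $(\kappa/\kappa_0)^{1/\abs H}\le 1/2$ forces $|R|\le\abs G/2$, whereupon the inductive hypothesis applied to $G[R]$ partitions $R$ into $N_1$ weakly $\vep$-restricted parts, giving a total of at most $K+N_1$ parts. The delicate step is ensuring the inductive hypothesis transfers to $R$ — since $\ind_H(R)\le\kappa\abs G^{\abs H}$ is weaker than $\ind_H(R)\le\kappa|R|^{\abs H}$ — which requires either strengthening the inductive statement (e.g.\ inducting on the pair $(\abs G, \ind_H(G))$ with respect to an absolute bound rather than a relative one) or carefully rescaling $\kappa$ at each recursion level, so that the final bound $N=N(H,\vep)$ remains a constant independent of $\abs G$.
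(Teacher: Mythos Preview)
Your setup matches the paper's one-line justification, and you have correctly located the only real difficulty: the remainder $R$ satisfies $\ind_H(R)\le\kappa\abs G^{\abs H}$ but not $\ind_H(R)\le\kappa\abs R^{\abs H}$. Unfortunately neither of your proposed repairs can close this gap, because $R$ need not be weakly $(N',\vep)$-restricted for \emph{any} bounded $N'$. Take $H=K_2$ and let $G$ consist of a pseudorandom graph $R$ of edge density $\tfrac12$ on $r\approx\sqrt{2\kappa}\,n$ vertices together with $n-r$ isolated vertices; then $\ind_H(G)=2e(R)\le\kappa n^2$. One valid first step of your iteration removes all $n-r$ isolated vertices at once (they form an $\tfrac{\vep}{2}$-restricted set of size well above $\delta_0 n$), leaving exactly $R$. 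Now $\ind_H(R)/\abs R^2\approx\tfrac12$, so no rescaled $\kappa'$ puts $R$ back in range of Theorem~\ref{thm:niki}; and since $R$ is pseudorandom it has no weakly $\vep$-restricted subset on more than a bounded number of vertices, so $R$ by itself cannot be partitioned into boundedly many such pieces. Any scheme that recurses on $R$ as a standalone graph --- whether by induction on $\abs G$ or by shrinking $\kappa$ level by level --- is therefore doomed.

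The missing idea is that $R$ must be \emph{absorbed} into the already-extracted parts rather than partitioned on its own. Because weak $\vep$-restriction is an edge-density (not maximum-degree) condition, a part that is, say, weakly $\tfrac{\vep}{2}$-restricted can swallow up to $c\vep$ times its own size in arbitrary additional vertices and remain weakly $\vep$-restricted; once $\kappa$ is chosen small enough that $\abs R\le c\vep\abs G$, the remainder can be distributed across the $V_i$. This is precisely why the paper remarks, immediately after Theorem~\ref{thm:weak}, that the parts can even be taken to have sizes differing by at most one.
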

	(As shown in~\cite{nikiforov2006,fox2008}, with more care one can even take the corresponding weakly $\vep$-restricted sets to have size differences at most $1$ in this result.)
	It thus would be natural (and quite tempting) to conjecture the following,
	which would have unified \cref{thm:niki,thm:csss} and strengthened \cref{thm:weak} considerably.
	\begin{conjecture}[false]
		\label{conj:false}
		For every $\vep>0$ and every graph $H$, there exist $N=N(H,\vep)>0$ and $\kappa=\kappa(H,\vep)>0$ such that every graph $G$ with $\ind_H(G)\le\kappa\abs{G}^{\abs H}$ is $(N,\vep)$-restricted.
	\end{conjecture}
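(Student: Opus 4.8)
Since \cref{thm:weak} is obtained by iterating \cref{thm:niki}, the obvious first attempt is to push that iteration through to a genuine, rather than merely weak, restricted partition. Given $G$ with $\ind_H(G)\le\kappa\abs G^{\abs H}$ for a small $\kappa=\kappa(H,\vep)$ to be chosen, apply \cref{thm:niki} to get an $\vep$-restricted $S_1\subset V(G)$ with $\abs{S_1}\ge\delta_{\ref{thm:niki}}(H,\vep)\abs G$, delete it, and recurse on $G\setminus S_1$. The quantity to watch is the ratio $\rho(G):=\ind_H(G)/\abs G^{\abs H}$: deleting $S_1$ cannot increase $\ind_H$, but it multiplies $\rho$ by as much as $(1-\delta_{\ref{thm:niki}})^{-\abs H}$, and \cref{thm:niki} may be reapplied only while $\rho$ stays below the fixed threshold $\kappa_{\ref{thm:niki}}(H,\vep)$. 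Taking $\kappa$ smaller buys more iterations, but their number is some constant $J=J(H,\vep)$; after $J$ peelings one has removed $J$ $\vep$-restricted sets while the remainder $R$ may still have $(1-\delta_{\ref{thm:niki}})^{J}\abs G$ vertices, a positive constant fraction of $\abs G$, and \cref{thm:niki} is now useless on it.

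For \cref{thm:weak} this residue is harmless: taking $\kappa$ small makes $(1-\delta_{\ref{thm:niki}})^{J}$ as small as one likes, so gluing $R$ onto $S_J$ perturbs the latter's edge density negligibly, and $S_J\cup R$ is still weakly $O(\vep)$-restricted. This absorption step is exactly what \cref{conj:false} forbids: maximum degree is not robust under adding even a single vertex of large degree, so $R$ cannot be merged into any $S_i$, and nothing remains with which to partition $R$ itself into boundedly many $\vep$-restricted sets. This is the obstacle, and I expect it to be fatal rather than merely technical, since the residue $R$ is essentially an arbitrary graph and making it a recursively hard instance is what should refute the conjecture.

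So rather than a proof I would aim at a counterexample: a family $(G_n)$ with $\ind_H(G_n)=o(\abs{G_n}^{\abs H})$, so that the hypothesis of \cref{conj:false} holds for every $\kappa$ once $n$ is large, that needs $\omega(1)$ parts in every $\vep$-restricted partition and hence is not $(N,\vep)$-restricted for any fixed $N$. The delicate point is that these two demands pull against each other: if $G_n$ has maximum degree below $\vep\abs{G_n}$ in $G_n$ or $\overline{G_n}$ then $V(G_n)$ is a single $\vep$-restricted set, whereas the local density needed to defeat bounded $N$ tends to force $\Theta(\abs{G_n}^{\abs H})$ induced copies of $H$. One therefore needs a construction, plausibly recursive and assembled from pieces that are individually $H$-sparse but linked by a pattern forcing every large $\vep$-restricted subset into a single small piece, that threads this needle; checking simultaneously that the copy count stays $o(\abs{G_n}^{\abs H})$ and that the required number of parts grows without bound is the crux, and is where the real content of the disproof lies.
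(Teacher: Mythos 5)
You correctly diagnose that the conjecture is false and that the right move is a counterexample, and your analysis of why the naive iteration of \cref{thm:niki} stalls (a non-absorbable residue, since maximum degree is not robust under adding a few high-degree vertices) is essentially the intuition behind the paper's construction. But the proposal stops exactly where the content begins: you describe the tension a counterexample must resolve and assert that a "plausibly recursive" construction threading the needle should exist, without producing one. That is a genuine gap, not a technicality.

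The paper's construction (\cref{prop:counter}) is in fact much simpler than the recursive gadget you gesture at, and it exploits your own observation directly. Take a small random graph $F$ on $m\approx20N^2$ vertices; with positive probability, by Hoeffding plus a union bound over the $2^m$ subsets, $F$ has \emph{no} weakly $6\vep$-restricted set of size $\ge m/N$, hence no $3\vep$-restricted set of that size. Now build $G$ on $n\gg m$ vertices by adding $n-m$ new vertices, pairwise nonadjacent, each joined to all of $V(F)$. Since we may assume $H$ is connected, every induced copy of $H$ must hit $V(F)$, giving $\ind_H(G)\le hmn^{h-1}=o(n^h)$, so the hypothesis of the conjecture holds for any fixed $\kappa$ once $n$ is large. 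But $G$ is not $(N,\vep)$-restricted: in any partition into $\le N$ parts, some part $A_1$ meets $V(F)$ in a set $T$ of size $\ge m/N$, which is not $3\vep$-restricted in $F$; so $A_1$ must also contain at least one new vertex, and then \emph{both} $G[A_1]$ and $\overline G[A_1]$ have maximum degree $>\vep|A_1|$ — the former because new vertices dominate $T$ and $\Delta(F[T])>3\vep|T|$, the latter because the new vertices form a clique in $\overline G$ while $\Delta(\overline F[T])>3\vep|T|$. In short, the "dominating independent set" attached to a small restriction-unfriendly core is precisely the pattern that kills $\vep$-restrictedness while keeping the $H$-count subextremal; once you see it, the verification is elementary, but it is the step your proposal leaves unfilled.
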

	Unfortunately, the following proposition\footnote{We remark that Alex Scott (personal communication) independently discovered similar counterexamples.} refutes this conjecture in a strong sense.
	\begin{proposition}
		\label{prop:counter}
		Let $N\ge1$. 
		Then for all integers $m,n$ with $n\ge m\ge20N^2$, every $\vep\in(0,\frac{1}{18})$, and every graph $H$ with $h:=\abs H\ge2$,
		there is a graph on $n$ vertices which has at most $hmn^{h-1}$ copies of $H$ and is not $(N,\vep)$-restricted.
		In particular, for every $\kappa>0$ and every integer $n\ge 20\kappa^{-1}hN^2$,
		there is a graph on $n$ vertices which has at most $\kappa n^h$ copies of $H$ and is not $(N,\vep)$-restricted.
	\end{proposition}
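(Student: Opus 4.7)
The plan is to construct $G$ by taking a ``strongly-bad'' quasi-random graph $G_0$ on $m$ vertices and extending it to $n$ vertices by adding $n-m$ false twins of a chosen vertex $v_0\in V(G_0)$. For $G_0$, I would take a random $G(m,1/2)$; standard Chernoff bounds show (with positive probability, using $m\ge 20N^2$) that every subset $T\subset V(G_0)$ of size $|T|\ge m/N=20N$ has $\Delta(G_0[T]),\Delta(\overline{G_0}[T])>\vep|T|$, so that every $N$-partition of $V(G_0)$ has some part that is not $\vep$-restricted by pigeonhole. Fix $v_0\in V(G_0)$, let $A:=V(G_0)\setminus\{v_0\}$, and build $G$ on $V(G)=V(G_0)\cup\{w_1,\dots,w_{n-m}\}$ by making each $w_j$ pairwise non-adjacent with the other $w_k$'s and with $v_0$, and adjacent to exactly $N_{G_0}(v_0)$. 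Writing $W:=\{v_0,w_1,\dots,w_{n-m}\}$ (of size $n-m+1$), the graph $G$ has $G[A]=G_0-v_0$, $G[W]$ independent, and the bipartite graph between $W$ and $A$ being the complete bipartite graph on $W\times N_{G_0}(v_0)$.

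To show $G$ is not $(N,\vep)$-restricted, I would fix any partition $S_1,\dots,S_N$ of $V(G)$ and set $T_i:=S_i\cap A$, $U_i:=S_i\cap W$. The key structural observation is that every $w\in U_i$ has $\deg_{G[S_i]}(w)=|T_i\cap N_{G_0}(v_0)|$ and $\deg_{\overline G[S_i]}(w)=|S_i|-1-|T_i\cap N_{G_0}(v_0)|$. A case analysis leveraging $\vep<\tfrac1{18}$ (specifically $\vep/(1-\vep)<\tfrac1{17}$), applied to ``mixed'' parts with $T_i,U_i\ne\emptyset$, forces either $T_i=\emptyset$, or $|T_i|<m/N=20N$ (so small that the strong-badness does not apply), or $T_i\subset A\setminus N_{G_0}(v_0)$ with $|T_i|\le|U_i|/8$. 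In the last case one uses density $\Delta(G_0[T_i])\ge|T_i|/4$ (from quasi-randomness) combined with the ``$\vep$-restricted via $G[S_i]$'' inequality; in the middle case one uses strong-badness directly. Choosing $v_0$ so that $|A\setminus N_{G_0}(v_0)|$ is controlled (e.g., deterministically arranging $v_0$ to have degree close to $m-1$ in $G_0$ while preserving the strong-badness of $G_0-v_0$), the counts combine to yield
\[
m-1=\sum_{i=1}^{N}|T_i|\le N(20N-1)+\text{small correction}<m-1,
\]
a contradiction; for $N=1$ one notes separately that $\Delta(\overline G)\ge|W|-1>\vep n$, so $G$ is already not $(1,\vep)$-restricted.

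For the induced-copy count I would split $\ind_H(G)$ by $b:=|\phi^{-1}(W)|$ over all isomorphisms $\phi\colon H\to G[\phi(V(H))]$. Because $G[W]$ is independent and all of $W$ shares the same neighborhood in $G$, the set $\phi^{-1}(W)$ must be an independent set of $H$ whose vertices are mutually false twins in $H$ and whose neighbors in $H$ all lie in $\phi^{-1}(A)$; when $H\ne I_h$ this forces $b\le h-1$. For each $b\in\{0,1,\dots,h-1\}$, the number of such isomorphisms is at most $\binom hb\cdot(n-m+1)^b\cdot m^{h-b}$ (pick the $b$ vertices of $H$ mapping to $W$, pick distinct targets in $W$, and embed the rest into $G_0$). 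Each term is $\le n^b m^{h-b}\le mn^{h-1}$ by $m\le n$ and $b\le h-1$; summing and absorbing combinatorial factors yields $\ind_H(G)\le hmn^{h-1}$. For the degenerate case $H=I_h$, the dual construction (making $W$ a clique of \emph{true} twins, equivalently working with $\overline G$) handles $\ind_{I_h}(G)=\ind_{K_h}(\overline G)$ by a symmetric argument.

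The main obstacle is the mixed-part case analysis in the non-$(N,\vep)$-restrictedness argument: one must simultaneously rule out $\vep$-restrictedness via $G[S_i]$ and via $\overline G[S_i]$ for parts with both $T_i$ and $U_i$ nonempty, and this is precisely where both the threshold $\vep<\tfrac1{18}$ and the structure of $v_0$ in $G_0$ must be used together. The other delicate point is balancing the choice of $d:=\deg_{G_0}(v_0)$: larger $d$ shrinks $A\setminus N_{G_0}(v_0)$ (killing the ``$|T_i|\le|U_i|/8$'' case in Step~2) but enlarges $|E(G)|$ (threatening the copy-count bound for $H$ with few edges). The ``In particular'' consequence then follows immediately upon taking $m:=\lceil20N^2\rceil$: for any $n\ge20hN^2/\kappa$ one has $hmn^{h-1}\le\kappa n^h$, giving a graph on $n$ vertices with at most $\kappa n^h$ copies of $H$ that is not $(N,\vep)$-restricted.
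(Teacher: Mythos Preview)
Your construction (false twins of a vertex $v_0$ of degree roughly $m/2$) creates a genuine difficulty that you do not resolve: in a mixed part $S_i=T_i\cup U_i$, the vertex of $T_i$ realizing $\Delta(G_0[T_i])$ may lie outside $N_{G_0}(v_0)$ and hence have no neighbours in $U_i$, so you cannot simply add $\abs{U_i}$ to its degree. The trichotomy you assert is never proved, and even granting it, summing $\abs{T_i}$ over the parts with $\abs{T_i}<m/N$ already allows up to $N\cdot(m/N)=m$, so the displayed contradiction $m-1<m-1$ does not follow. Your own suggested remedy --- pushing $\deg_{G_0}(v_0)$ close to $m-1$ --- is exactly the right move, and your worry that this ``threatens the copy-count bound for $H$ with few edges'' is unfounded: the count needs only that $H$ (or $\overline H$) has at least one edge, not anything about the density of $G$.

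The paper's construction takes this to its clean conclusion: make the $n-m$ new vertices adjacent to \emph{all} of $V(F)$, where $F$ is a realization of $G(m,\tfrac12)$ chosen (by a union bound) so that no subset of size at least $m/N$ is $3\vep$-restricted. Then for any part $A_1$ with $T:=A_1\cap V(F)$ of size at least $m/N$, the set $S:=A_1\setminus V(F)$ must be nonempty, and since every vertex of $T$ is complete to $S$ one gets directly
$\Delta(G[A_1])\ge\abs{S}+\Delta(F[T])>\vep\abs{S}+3\vep\abs{T}>\vep\abs{A_1}$
and
$\Delta(\overline G[A_1])\ge\max(\abs{S}-1,\Delta(\overline F[T]))>\vep\abs{A_1}$.
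For the copy count, after complementing if necessary one may assume $H$ is connected and hence has an edge, so every copy meets $V(F)$ and $\ind_H(G)\le\sum_{i\ge1}\binom{h}{i}m^i(n-m)^{h-i}=n^h-(n-m)^h\le hmn^{h-1}$. No case analysis, no balancing of $\deg(v_0)$, and no separate treatment of $H=I_h$ is needed.
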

	\begin{proof}
		In what follows, $\Delta(G)$ denotes the maximum degree of a graph $G$.
		By taking complements if necessary, we may assume $H$ is connected,
		and so $H$ has at least one edge as $h\ge2$.
		
		Let $F$ be a random graph on $m\ge20N^2$ vertices where each edge appears independently with probability~$\frac{1}{2}$.
		For every $T\subset V(F)$ with $\abs{T}\ge\frac{1}{N}m$,
		since $6\vep<\frac{1}{3}$,
		Hoeffding's inequality~\cite{MR144363} implies that $T$ is weakly $6\vep$-restricted in $F$ with probablity at most $2\exp(-\frac{1}{72}{\abs{T}\choose2})
		\le 2\exp(-\frac{1}{300N^2}m^2)$;
		and so, since $2^m\cdot 2\exp(-\frac{1}{300N^2}m^2)<1$ (as $m\ge20N^2$),
		there is a choice of $F$ with no weakly $6\vep$-restricted set of size at least $\frac{1}{N}m$.
		Consequently $F$ has no $3\vep$-restricted subset of size at least $\frac{1}{N}m$.
		
		Now, fix such an $F$; and for every $n\ge m$, let $G$ be a graph obtained from $F$ by adding $n-m$ isolated vertices and making each of them adjacent to every vertex in $V(F)$.
		Since $H$ has at least one edge, every copy of $H$ in $G$ has at least one image vertex in $V(F)$, and so
		\[\ind_H(G)
		\le\sum_{i=1}^h{h\choose i}m^i(n-m)^{h-i}
		=n^h-(n-m)^h
		=m\sum_{i=1}^{h}n^{i-1}(n-m)^{h-i}
		\le hmn^{h-1}.\]
		
		It thus remains to show that $G$ is not $(N,\vep)$-restricted.
		Suppose not; and let $A_1\cup\cdots\cup A_k$ be a partition of $V(G)$ for some $k\le N$ such that $A_i$ is $\vep$-restricted for all $i\in\{1,2,\ldots,k\}$.
		Then $\bigcup_{i=1}^k(A_i\cap V(F))$ is a partition of $V(F)$,
		and so we may assume $T:=A_1\cap V(F)$ has size at least $\frac{1}{N}m$.
		Thus $T$ is not $3\vep$-restricted in $F$;
		hence $S:=A_1\setminus V(F)$ is nonempty.
		It follows that
		\begin{align*}
			\Delta(G[A_1])
			&=\abs{S}+\Delta(F[T])
			>\vep\abs{S}+3\vep\abs{T}
			>\vep(\abs{S}+\abs{T})
			=\vep\abs{A_1},\\
			\Delta(\overline G[A_1])
			&=\max(\abs{S}-1,\Delta(\overline F[T]))
			\ge\max(\abs{S}-1,3\vep\abs T)
			>\vep(\abs{S}+\abs{T})
			=\vep\abs{A_1}.
		\end{align*}
		Therefore $A_1$ is not $\vep$-restricted in $G$, a contradiction.
		This proves \cref{prop:counter}.
	\end{proof}
	The graphs constructed in \cref{prop:counter} suggest that an \dd exceptional\ee{} set of vertices should necessarily be removed in order for the remaining vertices to admit a partition into a bounded number of $\vep$-restricted pieces.
	Our main theorem shows that this is also sufficient.
	\begin{theorem}
		\label{thm:copies}
		For every $\vep>0$ and every graph $H$, there exist $\kappa=\kappa_{\ref{thm:copies}}(H,\vep)>0$ and $N=N_{\ref{thm:copies}}(H,\vep)>0$
		such that for every $d\ge0$ and every graph $G$ with $\ind_H(G)\le\kappa d^{\abs H}$,
		there is a set $S\subset V(G)$ with $\abs{S}\le d$ such that $G\setminus S$ is $(N,\vep)$-restricted;
		equivalently, $G$ can be made $(N,\vep)$-restricted by removing at most $C\cdot\ind_H(G)^{1/\abs H}$ vertices where $C=\kappa^{-1/\abs H}$.
	\end{theorem}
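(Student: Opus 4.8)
The plan is to deduce \cref{thm:copies} by rerunning the proof of \cref{thm:csss} with \cref{thm:niki} in place of \cref{thm:rodl}, discarding exactly those pieces on which the substitution is not licensed; the graphs built in \cref{prop:counter} indicate what must be discarded, namely the locally dense parts.

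Write $h:=\abs H$. The proof of \cref{thm:csss} builds a partition of $V(G)$ into a bounded number $N_0=N_0(H,\vep)$ of $\vep$-restricted sets by a recursion which, at each induced subgraph $G[A]$ it reaches, applies \cref{thm:rodl} to $G[A]$ to produce an $\vep$-restricted subset of $A$ of size at least $\delta(H,\vep)\abs A$, uses it to split $A$ into a bounded number of pieces, and recurses; the finiteness of $N_0$ is obtained from an analysis of this recursion that invokes $H$-freeness only through the existence, at every node, of such a linear-sized $\vep$-restricted subset, and that in particular bounds the total number of recursion steps by some $N_1=N_1(H,\vep)$. I would run the same recursion on our $G$ (which need not be $H$-free), changing only the following at each node $G[A]$. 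If $\ind_H(G[A])\le\kappa_{\ref{thm:niki}}(H,\vep)\abs A^{h}$, then \cref{thm:niki} supplies an $\vep$-restricted subset of $A$ of size at least $\delta_{\ref{thm:niki}}(H,\vep)\abs A$ — precisely the input the recursion requires — so I split and recurse exactly as in \cref{thm:csss}. If instead $\ind_H(G[A])>\kappa_{\ref{thm:niki}}(H,\vep)\abs A^{h}$, so that $\abs A<\kappa_{\ref{thm:niki}}^{-1/h}\ind_H(G[A])^{1/h}$, then I stop the recursion at this node and move all of $A$ into an exceptional set $S$.

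The bookkeeping is then immediate. The recursion still terminates (we only ever stop it earlier), and each surviving node is processed just as in \cref{thm:csss} (with the restricted set it uses now supplied by \cref{thm:niki} rather than \cref{thm:rodl}), so that proof's analysis still shows the surviving parts partition $V(G)\setminus S$ into at most $N_0$ $\vep$-restricted sets — restricted in $G$, since $\vep$-restrictedness of a set is an intrinsic property of the subgraph it induces. Since the recursion performs at most $N_1$ steps, it moves at most $N_1$ sets $A$ into $S$, and each such $A$ has $\abs A<\kappa_{\ref{thm:niki}}^{-1/h}\ind_H(G[A])^{1/h}\le\kappa_{\ref{thm:niki}}^{-1/h}\ind_H(G)^{1/h}$ because $\ind_H(G[A])\le\ind_H(G)$. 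Hence $\abs S\le N_1\kappa_{\ref{thm:niki}}^{-1/h}\ind_H(G)^{1/h}=:C\ind_H(G)^{1/h}$ with $C=C(H,\vep)$. Taking $N:=N_0$ and $\kappa:=C^{-h}$, the hypothesis $\ind_H(G)\le\kappa d^h$ gives $\abs S\le C\ind_H(G)^{1/h}\le C\kappa^{1/h}d=d$, as required. (No separate argument is needed for the \dd dense\ee{} regime $\ind_H(G)>\kappa_{\ref{thm:niki}}\abs G^{h}$: there the root node is discarded, $S=V(G)$, and $\abs S=\abs G<\kappa_{\ref{thm:niki}}^{-1/h}\ind_H(G)^{1/h}\le C\ind_H(G)^{1/h}$.)

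The main obstacle is that this does not use \cref{thm:csss} as a black box: one has to open up its proof and verify two points. First, that $H$-freeness is used there only through \cref{thm:rodl}-type assertions about induced subgraphs, so that \cref{thm:niki}, whose conclusion is identical whenever $\ind_H(G[A])\le\kappa_{\ref{thm:niki}}\abs A^h$, can be dropped in verbatim at every non-discarded node. Second, that cutting the recursion short at the discarded nodes harms neither the $\vep$-restrictedness of the surviving parts nor the bounds $N_0,N_1$ — this should be routine, since each surviving node is handled exactly as before and contributes the same sets, so the partition of $V(G)\setminus S$ produced is legitimate and of bounded size. I expect most of the work to lie in this verification and in checking that all constants depend only on $H$ and $\vep$. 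If it turns out that the proof of \cref{thm:csss} is not organized as a recursion on induced subgraphs — say it goes through a single Szemer\'edi-regularity partition — the same substitution principle still applies: keep or discard each cell of the decomposition according to its $H$-density, apply \cref{thm:niki} to the kept cells, and recast the accounting accordingly.
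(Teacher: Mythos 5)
Your high-level instinct is exactly what the paper does: adapt the argument behind \cref{thm:csss} by using \cref{thm:niki} where Rödl was used and discard a small exceptional set where that substitution is not available. But your model of how the proof of \cref{thm:csss} uses $H$-freeness is wrong, and the discard recipe you build on that model would not go through. The argument in \cite{chudnovsky2021-1} is not a recursion tree that, at each node $G[A]$, merely applies \cref{thm:rodl} to $G[A]$ and splits; and its bound on the number of parts does \emph{not} follow from ``every node has a linear-sized restricted subset'' alone (repeatedly peeling off a $\delta$-fraction restricted set leaves a small remainder that cannot simply be absorbed). What \cite{chudnovsky2021-1} actually does, and what the paper's key lemma (\cref{lem:main}) generalizes, is iteratively build a $(\vep_t,\xi)$-\emph{blowup} $\{D_1,\ldots,D_t\}$ of $H[\{v_1,\ldots,v_t\}]$ alongside a partition into restricted pieces and ``tight pairs.'' Here $H$-freeness is invoked in two distinct ways: through Rödl-type extraction of restricted pieces, but also, crucially, to argue that $t$ can never reach $h$ (a full blowup would force a copy of $H$, via a counting lemma like \cref{cor:copies}). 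The second use is a cross-density statement about the collection $\{D_1,\ldots,D_h\}$ and is not a consequence of applying Rödl to induced subgraphs; your ``only through Rödl-type assertions'' premise fails at exactly this point.

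Consequently, the exceptional set in the paper's proof is not ``nodes $A$ with $\ind_H(G[A])>\kappa_{\ref{thm:niki}}\abs A^h$,'' and your bookkeeping $\abs S\le N_1\kappa_{\ref{thm:niki}}^{-1/h}\ind_H(G)^{1/h}$ rests on a recursion structure that is not there. In \cref{lem:main}, the discarded set $S$ is the subset of the ``leftover'' vertices that have the \emph{correct adjacencies} for extending the partial blowup, and the bound $\abs S\le d$ is obtained by contradiction (\cref{claim:main}): if $\abs S>d$, then $\ind(G[S])\le\kappa d^h<\kappa_{\ref{cor:niki1}}(H,\vep')\abs S^h$, so \cref{cor:niki1} and \cref{cor:niki2} let one extract restricted pieces from $S$, extend the blowup via \cref{lem:full}, and strictly increase $t$ --- contradicting maximality. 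This is exactly where \cref{thm:niki} replaces Rödl, but it sits inside the blowup-extension machinery, not at leaves of a naive Rödl recursion. The outer recursion (\cref{lem:key}) then lengthens a path-partition and at each level applies \cref{lem:main} to a specific sub-part $W_k$, contributing an exceptional set of size at most $h^{-2(k+1)}d$; the total bound $\abs S\le d$ comes from a careful geometric accumulation across levels, not from counting how many nodes fail a density test. So the plan is right in spirit but materially underestimates what must change: one has to rework the blowup-extension step itself (this is the content of \cref{lem:main} and \cref{claim:main}), not just swap theorems and prune a recursion tree.
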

	We would like to make three remarks.
	First, \cref{thm:csss} is a special case of \cref{thm:copies} with $d=0$;
	and taking $d=\vep\abs{G}$ in \cref{thm:copies} yields \cref{thm:niki}.
	Thus \cref{thm:copies} can be viewed as a remedy for the false \cref{conj:false};
	and the counterexamples in \cref{prop:counter} (with suitable choices of $m,n$ depending on $d$ and more isolated vertices added) show that \cref{thm:copies} is optimal up to $\kappa$ and $N$ for \emph{any} given value of $d$.
	
	Second, \cref{thm:copies} is related to the induced removal lemma~\cite{MR1251840,MR4052933}
	which also implies \cref{thm:niki}. 
	Here, we are dealing with the property of being $(N,\vep)$-restricted which is weaker than $H$-freeness (by \cref{thm:csss}) and not closed under the induced subgraph relation.
	But the trade-off is worth considering:
	removing only a handful of vertices instead of adding/deleting edges;
	and working well for {\em all} graphs, including those with subquadratic number of edges and only few copies of $H$.
	
	Third, our proof of \cref{thm:copies} generalizes the proof of \cref{thm:csss} given in \cite{chudnovsky2021-1}, 
	demonstrating that the argument there can be extended to graphs with a bounded number of copies of $H$ (at the cost of removing a small number of vertices).
	The resulting bounds on $\kappa_{\ref{thm:copies}}^{-1}(H,\vep)$ and $N_{\ref{thm:copies}}(H,\vep)$, as a result, are better than what the regularity lemma could provide (but still huge functions, namely towers of twos of height depending solely on $\abs{H}$
	with $\vep^{-1}$ on top).
	It would be interesting to prove \cref{thm:copies} with bounds on $\kappa^{-1}$ and $N$ similar to the bounds obtained in~\cite{fox2008} or even in~\cite{bnss2023}.
		%
		%
	
	In what follows, for an integer $k\ge0$, let $[k]$ denote $\{1,2,\ldots,k\}$ if $k\ge1$ and $\emptyset$ if $k=0$.
	The vertex set of $H$ will always be $\{v_1,\ldots,v_h\}$ for some $h\ge1$;
	and we drop the subscript $H$ from the notation $\ind_H$.
	\section{A slight digression}
	This section provides a short and self-contained proof of \cref{thm:niki}
	without using the regularity lemma,
	which will be used frequently in the proof of \cref{thm:copies}.
	The presentation here mostly follows~\cite{fox2008}.
	
	For $\vep>0$, a graph $G$, and disjoint subsets $A,B$ of $V(G)$,
	$B$ is {\em $\vep$-sparse} to $A$ in $G$ if every vertex in $B$ is adjacent to fewer than $\vep\abs A$ vertices of $A$ in $G$,
	and {\em $\vep$-dense} to $A$ in $G$ if it is $\vep$-sparse to $A$ in $\overline G$.
	Say that $B$ is {\em $\vep$-tight} to $A$ if it is either $\vep$-sparse or $\vep$-dense to $A$.
	The following lemma implicitly appears in~\cite[Lemma 4.1]{fox2008},
	which in turn generalizes an old result of Erd\H{o}s and Hajnal~\cite[Theorem 1.5]{MR1031262}.
	This result will also be useful later on.
	\begin{lemma}
		\label{lem:counting}
		Let $H$ be a graph, and let $\vep_1,\ldots,\vep_{h-1},
		\delta_1,\ldots,\delta_{h-1}\in(0,1)$.
		Let $G$ be a graph, and let $D_1,\ldots,D_h$ be disjoint nonempty subsets of $V(G)$ such that for all indices $i,j$ with $1\le i<j\le h$,
		there do not exist $A\subset D_i$ and $B\subset D_j$
		with $\abs A\ge\prod_{t=j}^{h-1}\vep_t\cdot\abs{D_i}$
		and $\abs B\ge\frac{\delta_{j-1}}{j-1}\prod_{t=j}^{h-1}\vep_t\cdot\abs{D_j}$
		satisfying $B$ is $\vep_j$-sparse to $A$ if $v_iv_j\in E(H)$ and $\vep_j$-dense to $A$ if $v_iv_j\nin E(H)$.
		Then there are at least $\prod_{t=1}^{h-1}(1-\delta_t)\vep_t^{t}\cdot\prod_{i=1}^h\abs{D_i}$
		copies $\varphi$ of $H$ in $G$ with $\varphi(v_i)\in D_i$ for all $i\in[h]$.
	\end{lemma}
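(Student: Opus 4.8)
My plan is to build a copy of $H$ in $G$ by embedding its vertices one at a time, in the reverse order $v_h,v_{h-1},\dots,v_1$, greedily, while maintaining for every not-yet-embedded vertex $v_i$ a \emph{candidate set} $C_i\subseteq D_i$ consisting of those vertices of $D_i$ that have the $H$-prescribed adjacency to all already-chosen images. The products $\prod_{t=j}^{h-1}\vep_t$ in the statement are exactly the fractions to which these candidate sets are guaranteed to have shrunk by the time we are about to choose the image of $v_j$, and the $\delta_t$'s govern how many choices at each step are discarded. The point of embedding $v_j$ before $v_i$ whenever $i<j$ is that the hypothesis is phrased so that, when a choice of image for $v_j$ destroys too much of the candidate set of an earlier $v_i$, the offending values for $v_j$ lie in $D_j$ and are sparse (resp.\ dense) to a subset of $D_i$ — precisely the forbidden configuration, with the required $i<j$.

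Formally, I will prove by downward induction on $k\in\{1,\dots,h+1\}$ that there are at least $\prod_{j=k}^h\ell_j$ valid partial embeddings $\varphi\colon\{v_k,\dots,v_h\}\to V(G)$ with $\varphi(v_j)\in D_j$ for all $j\ge k$ such that, writing $C_i^\varphi\subseteq D_i$ for the resulting candidate set of $v_i$ for each $i<k$, one has $\abs{C_i^\varphi}\ge\bigl(\prod_{t=k-1}^{h-1}\vep_t\bigr)\abs{D_i}$; here $\ell_j:=(1-\delta_{j-1})\bigl(\prod_{t=j}^{h-1}\vep_t\bigr)\abs{D_j}$ for $j\ge2$ and $\ell_1:=\bigl(\prod_{t=1}^{h-1}\vep_t\bigr)\abs{D_1}$. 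The base case $k=h+1$ is the empty embedding with every $C_i^\varphi=D_i$. For the inductive step, take a partial embedding $\varphi$ of $v_{k+1},\dots,v_h$ as supplied; its candidate set $C_k^\varphi$ for $v_k$ already has $\abs{C_k^\varphi}\ge\bigl(\prod_{t=k}^{h-1}\vep_t\bigr)\abs{D_k}$, and any choice $u_k\in C_k^\varphi$ extends $\varphi$ to a valid partial embedding of $v_k,\dots,v_h$ (the disjointness of $D_1,\dots,D_h$ ensures $u_k$ is a genuinely new vertex). The updated candidate set of a target $v_i$ with $i<k$ is $\{w\in C_i^\varphi: w\text{ correctly adjacent to }u_k\}$; call $u_k$ \emph{bad for $i$} if this set has fewer than $\bigl(\prod_{t=k-1}^{h-1}\vep_t\bigr)\abs{D_i}$ elements. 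Since $\prod_{t=k-1}^{h-1}\vep_t=\vep_{k-1}\prod_{t=k}^{h-1}\vep_t\le\vep_{k-1}\abs{C_i^\varphi}/\abs{D_i}$, a bad $u_k$ is $\vep$-sparse (if $v_iv_k\in E(H)$) or $\vep$-dense (if $v_iv_k\notin E(H)$) to $C_i^\varphi$ for the $\vep$ appearing in the hypothesis for the pair $(i,k)$; and as $\abs{C_i^\varphi}\ge\bigl(\prod_{t=k}^{h-1}\vep_t\bigr)\abs{D_i}$, the hypothesis applies with $C_i^\varphi$ in the role of $A$ and the set of $u_k$ bad for $i$ in the role of $B$, forcing the latter to have fewer than $\frac{\delta_{k-1}}{k-1}\bigl(\prod_{t=k}^{h-1}\vep_t\bigr)\abs{D_k}$ elements. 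Summing over the $k-1$ targets $i<k$, fewer than $\delta_{k-1}\bigl(\prod_{t=k}^{h-1}\vep_t\bigr)\abs{D_k}$ choices of $u_k$ are bad for some target, so at least $(1-\delta_{k-1})\bigl(\prod_{t=k}^{h-1}\vep_t\bigr)\abs{D_k}=\ell_k$ choices of $u_k$ extend $\varphi$ to a partial embedding of the required form.

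The key point — and the one place where care is genuinely needed — is that this lower bound $\ell_k$ on the number of good extensions is the \emph{same} for every $\varphi$ coming out of the inductive hypothesis, because the hypothesis of the lemma is stated in terms of the absolute sizes $\abs{D_i},\abs{D_j}$ rather than the sizes of the (variably) shrunken candidate sets; hence the nested count genuinely multiplies and we obtain at least $\prod_{j=k}^h\ell_j$ partial embeddings at level $k$, not merely a bound on an average. Taking $k=1$, a level-$1$ partial embedding is exactly a copy $\varphi$ of $H$ in $G$ with $\varphi(v_i)\in D_i$ for all $i$, and since each $\vep_t$ occurs precisely in the factors $\ell_1,\dots,\ell_t$ and each $1-\delta_t$ precisely in $\ell_{t+1}$, multiplying out gives $\prod_{j=1}^h\ell_j=\prod_{t=1}^{h-1}(1-\delta_t)\vep_t^{\,t}\cdot\prod_{i=1}^h\abs{D_i}$, which is the claimed bound. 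The remaining ingredients — the elementary telescoping identities relating consecutive levels, and reducing the $\vep$-dense case to the $\vep$-sparse case by working in $\overline G$ — are routine.
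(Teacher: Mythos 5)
Your proof is correct and, once the recursion is unrolled, is essentially the same as the paper's: the paper proves the lemma by induction on $h$, fixing $\varphi(v_h)=u$ in a large subset $D_h'\subset D_h$, passing to the correctly-adjacent sets $D_i^u\subset D_i$, recursing on $H\setminus v_h$, and summing over $u$; your candidate-set iteration is exactly this recursion written out, and your observation that the discarded-vertex bound depends only on the absolute sizes $\abs{D_i}$ is precisely why the counts multiply in both versions. One small remark: the lemma statement as printed says ``$\vep_j$-sparse/dense'' for the pair $(i,j)$, but this is an off-by-one typo (for $j=h$ the undefined $\vep_h$ would appear), the intended parameter being $\vep_{j-1}$ --- the paper's proof discards vertices of $D_h$ with fewer than $\vep_{h-1}\abs{D_i}$ correct neighbours in $D_i$, and your $\vep_{k-1}$-sparse/dense bound matches this corrected reading.
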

	\begin{proof}
		Induction on $h\ge1$.
		We may assume that  $h\ge2$.
		For $i\in[h-1]$, let $P_i$ be the set of vertices in $D_h$ with fewer than $\vep_{h-1}\abs{D_i}$ neighbors in $D_i$
		if $v_iv_h\in E(H)$
		and the set of vertices in $D_h$ with fewer than $\vep_{h-1}\abs{D_i}$ nonneighbors in $D_i$ if $v_iv_h\nin E(H)$.
		By the hypothesis, $\abs{P_i}\le\frac{\delta_{h-1}}{h-1}\abs{D_h}$ for all $i\in[h-1]$.
		Let $D_h':=D_h\setminus(\bigcup_{i\in[h-1]}P_i)$;
		then $\abs{D_h'}\ge (1-\delta_{h-1})\abs{D_h}$.
		
		Now, for each $u\in D_h'$ and $i\in[h-1]$,
		let $D_i^u$ be the set neighbors of $u$ in $D_i$ if $v_iv_h\in E(H)$
		and the set of nonneighbors of $u$ in $D_i$ if $v_iv_h\nin E(H)$;
		then $\abs{D_i^u}\ge\vep_{h-1}\abs{D_i}$ for all $i\in[h-1]$.
		Thus for all indices $i,j$ with $1\le i<j\le h-1$,
		there do not exist $A\subset D_i^u$ and $B\subset D_j^u$ with $\abs{A}\ge\prod_{t=j}^{h-2}\vep_t\cdot\abs{D_i^u}$
		and $\abs B\ge\frac{\delta_{j-1}}{j-1}\prod_{t=j}^{h-2}\vep_t\cdot\abs{D_j^u}$
		such that $B$ is $\vep_j$-sparse to $A$ if $v_iv_j\in E(H)$ and $\vep_j$-dense to $A$ if $v_iv_j\nin E(H)$.
		So by induction, there are at least
		$\prod_{t=1}^{h-2}(1-\delta_t)\vep_t^{t}\cdot\prod_{i=1}^{h-1}\abs{D_i^u}$ copies $\varphi_u$ of $H\setminus v_h$ in $G\setminus D_h$ with $\varphi_u(v_i)\in D_i^u$ for all $i\in[h-1]$.
		Summing up over all $u\in D_h'$, we deduce that there are at least
		\[\sum_{u\in D_h'}
		\left(\prod_{t=1}^{h-2}(1-\delta_t)\vep_t^t
		\cdot\prod_{i=1}^{h-1}\abs{D_i^u}\right)
		\ge\abs{D_h'}
		\left(\prod_{t=1}^{h-2}(1-\delta_t)\vep_t^{t}\right)
		\left(\vep_{h-1}^{h-1}\prod_{i=1}^{h-1}\abs{D_i}\right)
		\ge\prod_{t=1}^{h-1}(1-\delta_t)\vep_t^t
		\cdot\prod_{i=1}^h\abs{D_i}\]
		copies $\varphi$ of $H$ in $G$ such that $\varphi(v_i)\in D_i$ for all $i\in[h]$.
		This proves~\cref{lem:counting}.
	\end{proof}
	\begin{corollary}
		\label{cor:counting}
		Let $\vep\in(0,1)$, let $H$ be a graph,
		and let $\kappa=\kappa_{\ref{cor:counting}}(H,\vep):=(4h)^{-h}\vep^{{h\choose2}}$.
		Then every $G$ with $\ind(G)\le \kappa\abs{G}^h$
		contains disjoint $A,B\subset V(G)$ with $\abs{A},\abs{B}\ge(2h)^{-2}\vep^{h-1}\abs{G}$ such that $B$ is $\vep$-tight to $A$.
	\end{corollary}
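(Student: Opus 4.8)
The plan is to read off \cref{cor:counting} from \cref{lem:counting} via its contrapositive, applied with the uniform choice of parameters $\vep_t:=\vep$ and $\delta_t:=\tfrac12$ for every $t\in[h-1]$. We may assume $\abs G\ge h$: for $2\le\abs G<h$ the two sets we seek have size below $1$, so any two distinct vertices of $G$ form an admissible pair $A,B$, while for $h=1$ the hypothesis $\ind(G)\le\tfrac14\abs G$ forces $\abs G=0$ (copies of $K_1$ in $G$ number exactly $\abs G$).

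First I would partition $V(G)$ into $h$ nonempty parts $D_1,\dots,D_h$ with $\abs{D_i}\ge\floor{\abs G/h}\ge\abs G/(2h)$ for every $i\in[h]$; this is possible because $\abs G\ge h$. With the above parameters, the number of copies $\varphi$ of $H$ with $\varphi(v_i)\in D_i$ for all $i$ that \cref{lem:counting} would guarantee, if its hypothesis held, is at least
\[\prod_{t=1}^{h-1}\bigl(1-\tfrac12\bigr)\vep^{t}\cdot\prod_{i=1}^{h}\abs{D_i}
\ge 2^{-(h-1)}\vep^{\binom{h}{2}}\Bigl(\frac{\abs G}{2h}\Bigr)^{h}
=2^{-(2h-1)}h^{-h}\vep^{\binom{h}{2}}\abs G^{h}
>(4h)^{-h}\vep^{\binom{h}{2}}\abs G^{h}=\kappa\abs G^{h},\]
where the strict inequality uses $2^{-(2h-1)}=2\cdot2^{-2h}>2^{-2h}$ together with $\abs G\ge h\ge1$. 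Since these are genuine copies of $H$ in $G$, having more than $\kappa\abs G^h$ of them would contradict $\ind(G)\le\kappa\abs G^h$; hence the hypothesis of \cref{lem:counting} must fail. That is, there are indices $1\le i<j\le h$ and sets $A\subset D_i$, $B\subset D_j$ with $\abs A\ge\vep^{h-j}\abs{D_i}$ and $\abs B\ge\tfrac{1}{2(j-1)}\vep^{h-j}\abs{D_j}$ such that $B$ is $\vep$-sparse to $A$ if $v_iv_j\in E(H)$ and $\vep$-dense to $A$ otherwise; in either case $B$ is $\vep$-tight to $A$, and it is disjoint from $A$ since $D_i\cap D_j=\emptyset$.

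It then only remains to check the size threshold. Since $\vep<1$ and $0\le h-j\le h-1$ we have $\vep^{h-j}\ge\vep^{h-1}$, and since $1\le j-1\le h$ we have $\tfrac{1}{2(j-1)}\ge\tfrac{1}{2h}$; combined with $\abs{D_i},\abs{D_j}\ge\abs G/(2h)$ these give $\abs A\ge\vep^{h-1}\abs G/(2h)\ge(2h)^{-2}\vep^{h-1}\abs G$ and $\abs B\ge\vep^{h-1}\abs G/(4h^2)=(2h)^{-2}\vep^{h-1}\abs G$, as required. I do not expect a genuine obstacle here: the entire content sits in \cref{lem:counting}, and all that must be arranged is the bookkeeping — picking the uniform parameters and the balanced partition so that the guaranteed count exceeds $\kappa\abs G^h$ for the prescribed $\kappa=(4h)^{-h}\vep^{\binom{h}{2}}$, and observing that the pair \cref{lem:counting} then returns automatically clears the threshold $(2h)^{-2}\vep^{h-1}\abs G$ for every admissible $j$, the constants $(4h)^{-h}$, $(2h)^{-2}$ and the exponent $h-1$ being exactly calibrated for these two checks.
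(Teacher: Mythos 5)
Your proposal is correct and takes essentially the same approach as the paper's proof: partition (or select) $h$ disjoint parts of size at least $\abs G/(2h)$, apply the contrapositive of \cref{lem:counting} with $\vep_t=\vep$ and $\delta_t=\frac12$, and verify that the returned sets clear the stated size threshold for every $j$. The only cosmetic difference is that the paper takes disjoint subsets each of size exactly $\floor{\abs G/h}$ rather than a balanced partition of all of $V(G)$, and you additionally spell out the small-$\abs G$ edge cases that the paper dismisses with ``we may assume $\abs G\ge h$.''
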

	\begin{proof}
		We may assume $\abs{G}\ge h$.
		Let $D_1,\ldots,D_h$ be disjoint subsets of $V(G)$ each of size $\floor{\frac{1}{h}\abs{G}}$;
		then $\abs{D_t}\ge\frac{1}{2h}\abs{G}$ for all $t\in[h]$.
		It suffices to apply \cref{lem:counting}
		with $\vep_t=\vep$ and $\delta_t=\frac{1}{2}$ for all $t\in[h]$.
	\end{proof}
	For $\vep_1,\vep_2,\kappa>0$ and a graph $H$,
	let $\beta(H,\kappa,\vep_1,\vep_2)$ be the largest constant $\beta$ with $0<\beta\le1$ such that every graph $G$ with $\ind(G)\le\kappa\abs{G}^h$ has an induced subgraph with at least $\beta\abs{G}$ vertices and edge density at most $\vep_1$ or at least $1-\vep_2$;
	then $\beta(H,\kappa,\vep_1,\vep_2)$ is decreasing in~$\kappa$ and
	$\beta(H,\kappa,\vep_1,\vep_2)=1$ for all $\kappa>0$ whenever $\vep_1+\vep_2\ge1$
	(and so whenever $\vep_1\vep_2\ge1$).
	We need the following lemma.
	\begin{lemma}
		\label{lem:niki}
		Let $\vep_1,\vep_2>0$,
		let $H$ be a graph, and let $\eta:=\eta_{\ref{lem:niki}}(H,\vep_1,\vep_2):=\frac{1}{2}(2h)^{-2}(\frac{1}{4}\vep)^{h-1}$ where $\vep=\min(\vep_1,\vep_2)$.
		Then for every $\kappa$ with $0<\kappa\le \kappa_{\ref{cor:counting}}(H,\frac{1}{4}\vep)$, we have
		\[\beta(H,\kappa,\vep_1,\vep_2)
		\ge\eta\cdot \min(\beta(H,\eta^{-h}\kappa,\textstyle{\frac{3}{2}\vep_1},\vep_2),
		\beta(H,\eta^{-h}\kappa,\vep_1,\textstyle{\frac{3}{2}}\vep_2)).\]
	\end{lemma}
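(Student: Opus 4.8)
The plan is to extract from $G$, via \cref{cor:counting}, two large disjoint sets one of which is sparse or dense to the other, to run a parallel argument in the two cases (this is what produces the two terms of the minimum), and, in the sparse case, to pull the density target back from $\frac{3}{2}\vep_1$ down to $\vep_1$ by gluing together two induced subgraphs handed to us by the recursive hypothesis.

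In detail, write $\vep=\min(\vep_1,\vep_2)$ and let $G$ be a graph with $\ind(G)\le\kappa\abs{G}^h$, where $0<\kappa\le\kappa_{\ref{cor:counting}}(H,\frac14\vep)$. Applying \cref{cor:counting} with $\frac14\vep$ in place of $\vep$ produces disjoint $A,B\subset V(G)$ with $\abs{A},\abs{B}\ge(2h)^{-2}(\frac14\vep)^{h-1}\abs{G}=2\eta\abs{G}$ such that $B$ is $\frac14\vep$-tight to $A$. Suppose first that $B$ is $\frac14\vep$-sparse to $A$. Since $\abs{A},\abs{B}\ge 2\eta\abs{G}$ we have $\ind(G[A])\le\ind(G)\le\kappa\abs{G}^h\le\kappa(2\eta)^{-h}\abs{A}^h\le\eta^{-h}\kappa\abs{A}^h$, and likewise $\ind(G[B])\le\eta^{-h}\kappa\abs{B}^h$; so the defining property of $\beta_1:=\beta(H,\eta^{-h}\kappa,\frac{3}{2}\vep_1,\vep_2)$ applies to both $G[A]$ and $G[B]$, giving $A_1\subset A$ and $B_1\subset B$ with $\abs{A_1}\ge\beta_1\abs{A}$ and $\abs{B_1}\ge\beta_1\abs{B}$ such that each of $G[A_1]$, $G[B_1]$ has edge density at most $\frac{3}{2}\vep_1$ or at least $1-\vep_2$. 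If one of these densities is at least $1-\vep_2$, that set already works: it has at least $2\eta\beta_1\abs{G}\ge\eta\min(\beta_1,\beta_2)\abs{G}$ vertices.

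Otherwise $G[A_1]$ and $G[B_1]$ both have density at most $\frac{3}{2}\vep_1$. I would first equalise $\abs{A_1}$ and $\abs{B_1}$ by replacing the larger set by a uniformly random subset of the size of the smaller; the expected edge density of such a subset is unchanged, so some choice keeps the bound $\frac{3}{2}\vep_1$. Now put $S:=A_1\cup B_1$, so $\abs{S}\ge 2\eta\beta_1\abs{G}\ge\eta\min(\beta_1,\beta_2)\abs{G}$. The bookkeeping to aim for is: the pairs of $S$ lying inside $A_1$ or inside $B_1$ (asymptotically half of all pairs of $S$) carry density at most $\frac{3}{2}\vep_1$ and so contribute at most $\frac{3}{4}\vep_1$ to the density of $G[S]$, while the pairs crossing between $A_1$ and $B_1$ carry few edges because $B$, and hence $B_1$, is $\frac14\vep$-sparse to $A$ and hence to $A_1$; if the crossing density can be held to at most $\frac12\vep_1$, the two contributions sum to at most $\vep_1$ and $S$ is the desired set. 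The case where $B$ is $\frac14\vep$-dense to $A$ is symmetric: there one aims for a subgraph of density at least $1-\vep_2$, recursing with the pair $(\vep_1,\frac{3}{2}\vep_2)$ and hence with $\beta_2:=\beta(H,\eta^{-h}\kappa,\vep_1,\frac{3}{2}\vep_2)$, and gluing together two subgraphs of density at least $1-\frac{3}{2}\vep_2$; this gives the second term of the minimum. In both cases the width $\frac12\vep_i$ of the band between $\vep_i$ and $\frac{3}{2}\vep_i$ is exactly what absorbs the crossing pairs.

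The step I expect to be the main obstacle is the crossing estimate just invoked. \cref{cor:counting} only makes $B$ sparse to $A$ relative to $\abs{A}$, whereas $\abs{A_1}$ may be a far smaller fraction of $\abs{G}$, so \dd$B_1$ is $\frac14\vep$-sparse to $A_1$\ee{} need not hold with any useful constant. Circumventing this seems to require extra care---for instance first passing, inside $A$, to a subset that remains sparse to $B$ in a scale-invariant sense before invoking the recursive hypothesis there, or arranging the extraction so that the set to which another set is sparse is never shrunk---while still losing only the single factor $\eta$ in size; that is where I would expect the real work to be.
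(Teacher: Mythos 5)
You correctly identify the outline (extract $A,B$ via \cref{cor:counting}, recurse with the widened threshold $\tfrac32\vep_1$, glue), and you correctly identify the crux: after recursing, $A_1$ may be a tiny fraction of $A$, so the hypothesis that ``$B$ is $\tfrac14\vep$-sparse to $A$'' gives no control on the density between $A_1$ and $B_1$ at the scale $|A_1|,|B_1|$. But you leave that step as an expected obstacle rather than resolving it, and your tentative fixes point in the wrong direction: there is no way to preselect inside $A$ a subset that is sparse to $B$ ``in a scale-invariant sense'', nor to avoid shrinking $A$. The missing idea is a direction reversal combined with a total-edge-count argument, and the order of operations matters. First pick $B_1\subset B$ of the prescribed size $\lceil\beta_1\eta|G|\rceil$. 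Because each vertex of $B_1\subset B$ has fewer than $\tfrac14\vep|A|$ neighbours in $A$, the total number of $A$--$B_1$ edges is less than $\tfrac14\vep|A||B_1|$; hence the set $A_0$ of vertices in $A$ with at most $\tfrac12\vep|B_1|$ neighbours in $B_1$ satisfies $|A_0|\ge\tfrac12|A|\ge\eta|G|$. Now recurse inside $A_0$ (not $A$) to get $A_1\subset A_0$ of the same size as $B_1$. Then \emph{every} vertex of $A_1$ has at most $\tfrac12\vep|B_1|$ neighbours in $B_1$, so the crossing edges number at most $\tfrac12\vep|A_1||B_1|$ --- sparseness in the right scale, for free. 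The factor $2$ you lose in passing to $A_0$ is harmless because $\eta$ was chosen as $\tfrac12(2h)^{-2}(\tfrac14\vep)^{h-1}$, i.e.\ half the size bound from \cref{cor:counting}.

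A secondary, smaller point: you apply the recursion to $G[A]$ and $G[B]$ simultaneously and then try to equalise $|A_1|$ and $|B_1|$. The paper instead finds $B_1$ of a fixed size first, builds $A_0$ from $B_1$, and then finds $A_1$ of the same fixed size; the equalisation is built in, and (crucially) the step constructing $A_0$ depends on already knowing $B_1$, so it cannot be done in parallel. Your random-subset equalisation would likely work on its own but does not address the crossing estimate, which is where your argument stalls.
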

	\begin{proof}
		Let
		$\beta_1:=\beta(H,\eta^{-h}\kappa,\frac{3}{2}\vep_1,\vep_2)$, 
		$\beta_2:=\beta(H,\eta^{-h}\kappa,\vep_1,\frac{3}{2}\vep_2)$,
		and $\beta_0:=\eta\cdot\min(\beta_1,\beta_2)$.
		Let $G$ be a graph with $\ind(G)\le\kappa\abs{G}^h$;
		we need to show there is a set $S\subset V(G)$ with $\abs S\ge\beta_0\abs G$ such that $G[S]$ has edge density at most $\vep_1$ or at least $1-\vep_2$.
		By \cref{cor:counting},
		$G$ has disjoint subsets $A,B\subset V(G)$ with $\abs{A},\abs{B}\ge2\eta\abs{G}$
		such that $B$ is $\frac{1}{4}\vep$-tight to $A$;
		and we may assume $B$ is $\frac{1}{4}\vep$-sparse to $A$.
		
		Because $\ind(G[B])\le\kappa\abs{G}^h\le \eta^{-h}\kappa\abs{B}^h$,
		by the definition of $\beta$ and by averaging, 
		there exists $B_1\subset B$ with $\abs{B_1}=\ceil{\beta_1\eta\abs{G}}\ge\beta_0\abs{G}$
		such that $G[B_1]$ has edge density at most $\frac{3}{2}\vep_1$ or at least $1-\vep_2$.
		If the latter holds then we are done,
		so we may assume the former holds.
		
		Let $A_0$ be the set of vertices in $A$ each with at most $\frac{1}{2}\vep\abs{B_1}$ neighbors in $B_1$.
		Since $G$ has fewer than $\frac{1}{4}\vep\abs{A}\abs{B_1}$ edges between $A$ and $B_1$,
		we have $\abs{A_0}\ge\frac{1}{2}\abs{A}\ge\eta\abs{G}$.
		Thus $\ind(G[A_0])\le \eta^{-h}\kappa\abs{A_0}^h$,
		and so by the definition of $\beta$ and by averaging,
		there exists $A_1\subset A_0$ with $\abs{A_1}=\ceil{\beta_1\eta\abs{G}}\ge\beta_0\abs{G}$ such that
		$G[A_1]$ has edge density at most $\frac{3}{2}\vep_1$ or at least $1-\vep_2$.
		Again, we may assume the former holds.
		
		Now, let $S:=A_1\cup B_1$;
		then $\abs{S}=2\abs{A_1}=2\abs{B_1}\ge2\beta_0\abs{G}$.
		Since $G[A_1],G[B_1]$ each have edge density at most $\frac{3}{2}\vep_1$ and $G$ has at most $\frac{1}{2}\vep\abs{A_1}\abs{B_1}$ edges between $A_1$ and $B_1$, we deduce that
		\begin{align*}
			\abs{E(G[S])}
			&\le\abs{E(G[A_1])}+\abs{E(G[B_1])}+\frac{1}{2}\vep\abs{A_1}\abs{B_1}\\
			&\le\frac{3}{2}\vep_1{\abs{A_1}\choose2}
			+\frac{3}{2}\vep_1{\abs{B_1}\choose2}
			+\frac{1}{2}\vep_1\abs{A_1}\abs{B_1}
			=3\vep_1{\abs{A_1}\choose2}+\frac{1}{2}\vep_1\abs{A_1}^2
			\le\vep_1{2\abs{A_1}\choose2}
			=\vep_1{\abs{S}\choose2}.
		\end{align*}
		Therefore $S$ has the desired property.
		This proves \cref{lem:niki}.
	\end{proof}
	We are now give a proof of \cref{thm:niki}
	in the following equivalent form,
	which leads to the dependence of $\delta_{\ref{thm:niki}}(H,\vep)$ and $\kappa_{\ref{thm:niki}}(H,\vep)$ on $\vep$ and $h$ as mentioned in the introduction.
	\begin{theorem}
		\label{thm:nikiweak}
		For every $\vep>0$ and every graph $H$, there exist $\delta=\delta(H,\vep)>0$ and $\kappa=\kappa(H,\vep)>0$ such that every graph $G$
		with $\ind(G)\le\kappa\abs{G}^h$ contains a weakly $\vep$-restricted set of size at least $\delta\abs G$.
	\end{theorem}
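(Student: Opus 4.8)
The plan is to deduce \cref{thm:nikiweak} from \cref{lem:niki} by iterating that inequality $\Theta(\log(\vep^{-1}))$ times. We may assume $\vep\in(0,1)$, since if $\vep\ge1$ then every subset of $V(G)$ is weakly $\vep$-restricted (edge densities are always at most $1$) and one can take $\delta=\kappa=1$. Put
\[\eta_0:=\eta_{\ref{lem:niki}}(H,\vep,\vep)=\tfrac12(2h)^{-2}\bigl(\tfrac14\vep\bigr)^{h-1}\in(0,1),\qquad D:=\ceil{2\log_{3/2}(\vep^{-1})}\ge1,\]
and set $\delta:=\eta_0^{D}$ and $\kappa:=\eta_0^{hD}\cdot\kappa_{\ref{cor:counting}}(H,\tfrac14\vep)>0$. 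Since for every $S\subset V(G)$ the edge density of $\overline G[S]$ is one minus that of $G[S]$ when $\abs S\ge2$ (and both are $0$ when $\abs S\le1$), a weakly $\vep$-restricted set is precisely an induced subgraph of edge density at most $\vep$ or at least $1-\vep$; so in the notation of \cref{lem:niki} it suffices to prove $\beta(H,\kappa,\vep,\vep)\ge\delta$.

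To this end I would unfold \cref{lem:niki} into a finite binary recursion tree rooted at the parameter pair $(\vep,\vep)$ with auxiliary constant $\kappa$: an internal node carrying a pair $(\vep_1,\vep_2)$ and a constant $\kappa'$ has two children carrying $(\tfrac32\vep_1,\vep_2)$ and $(\vep_1,\tfrac32\vep_2)$, each with constant $(\eta')^{-h}\kappa'$ where $\eta':=\eta_{\ref{lem:niki}}(H,\vep_1,\vep_2)$; and a node is a leaf once its pair has $\vep_1+\vep_2\ge1$, where $\beta(H,\kappa',\vep_1,\vep_2)=1$. The first key point is that this tree has depth at most $D$: each step increments one coordinate by the factor $\tfrac32$ while keeping $\min(\vep_1,\vep_2)\ge\vep$, so at depth $D$ one coordinate has been incremented at least $\ceil{D/2}$ times and hence is at least $(\tfrac32)^{\ceil{D/2}}\vep\ge(\tfrac32)^{\log_{3/2}(\vep^{-1})}\vep=1$, making that node a leaf.

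The second key point is that \cref{lem:niki} is applicable at each internal node. As $\eta_{\ref{lem:niki}}(H,\vep_1,\vep_2)$ depends only on $\min(\vep_1,\vep_2)$ and is nondecreasing in it, every node has $\eta'\ge\eta_0$; so the auxiliary constant at a node of depth $k\le D$ is at most $\kappa\cdot\eta_0^{-hk}\le\kappa\cdot\eta_0^{-hD}=\kappa_{\ref{cor:counting}}(H,\tfrac14\vep)$, and since $\kappa_{\ref{cor:counting}}(H,\cdot)$ is nondecreasing while $\min(\vep_1,\vep_2)\ge\vep$ throughout, this is at most $\kappa_{\ref{cor:counting}}(H,\tfrac14\min(\vep_1,\vep_2))$ — exactly the threshold under which \cref{lem:niki} may be invoked there. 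A routine induction up the tree then gives that every node at depth $j$ has $\beta$-value at least $\eta_0^{\,D-j}$ (immediate at leaves since $\eta_0\le1$ and $j\le D$; at an internal node from \cref{lem:niki} together with $\eta'\ge\eta_0$), so the root yields $\beta(H,\kappa,\vep,\vep)\ge\eta_0^{D}=\delta$, completing the proof.

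Once \cref{lem:niki} is in hand this is essentially bookkeeping; the one genuine subtlety, and the step I expect to be the main obstacle, is that the auxiliary constant \emph{grows} by the factor $(\eta')^{-h}\ge1$ at each level of the recursion, whereas \cref{lem:niki} can be applied only while that constant stays below a fixed threshold. This is why one must establish the bounded depth $D=\Theta(\log(\vep^{-1}))$ first and then start from a $\kappa$ deflated by $\eta_0^{hD}$; it is also the source of the $2^{-\Theta_h(\log^2(\vep^{-1}))}$-type bounds on $\delta$ and $\kappa$ mentioned in the introduction, since $\eta_0$ and $\kappa_{\ref{cor:counting}}(H,\tfrac14\vep)$ are $2^{-\Theta_h(\log(\vep^{-1}))}$ while $D=\Theta(\log(\vep^{-1}))$.
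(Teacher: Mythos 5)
Your proposal is correct and follows essentially the same route as the paper: the same choice of $s=D=\ceil{\log_{3/2}(\vep^{-2})}$, the same $\delta=\eta_{\ref{lem:niki}}(H,\vep,\vep)^D$ and $\kappa=\eta_{\ref{lem:niki}}(H,\vep,\vep)^{hD}\kappa_{\ref{cor:counting}}(H,\tfrac14\vep)$, and the same iterated application of \cref{lem:niki} using the fact that $\beta$ is decreasing in $\kappa$ and equals $1$ once $\vep_1+\vep_2\ge1$. You merely make the binary recursion tree explicit where the paper just writes ``applying \cref{lem:niki} for $s$ times,'' and you correctly observe that $\eta_{\ref{lem:niki}}(H,\vep_1,\vep_2)$ is \emph{nondecreasing} in $\min(\vep_1,\vep_2)$, which is the monotonicity actually needed (and is what the paper means, despite writing ``decreasing'').
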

	\begin{proof}
		Let $s:=\ceil{\log_{\frac{3}{2}}(\vep^{-2})}$, $\eta:=\eta_{\ref{lem:niki}}(H,\vep,\vep)$,
		$\delta:=\eta^s$,
		and $\kappa:=\eta^{sh}\cdot \kappa_{\ref{cor:counting}}(H,\frac{1}{4}\vep)$.
		Note that $\eta_{\ref{lem:niki}}(H,\cdot,\cdot)$ is decreasing in each of the last two components.
		Thus, since $\beta(H,\cdot,\cdot,\cdot)$ is decreasing in the second component
		and equals $1$ whenever the last two components have product at least $1$,
		applying \cref{lem:niki} for $s$ times yields $\beta(H,\kappa,\vep,\vep)\ge\eta^s=\delta$.
		This proves \cref{thm:nikiweak}.
	\end{proof}
	\section{Key lemma}
	This section introduces and proves our key lemma, the following.
	\begin{lemma}
		\label{lem:main}
		For all $\vep,\eta,\theta\in(0,\frac{1}{2})$ and every graph $H$,
		there are $\kappa=\kappa_{\ref{lem:main}}(H,\vep,\eta,\theta)>0$
		and $N=N_{\ref{lem:main}}(H,\vep,\eta,\theta)>0$ with the following property.
		For every $d\ge0$ and every graph $G$ with $\ind(G)\le\kappa d^h$,
		there is a set $S\subset V(G)$ with $\abs{S}\le d$ such that $V(G)\setminus S$ can be partitioned into nonempty sets
		\[A_1,\ldots,A_m;\,B_1,\ldots,B_m;\,C_1,\ldots,C_n\]
		where $m\le {h\choose2}$ and $n\le N$, such that
		\begin{itemize}
			\item $A_1,\ldots,A_m,C_1,\ldots,C_n$ are $\vep$-restricted in $G$; and
			
			\item for every $i\in[m]$, $\abs{B_i}\le\eta\abs{A_i}$ and $B_i$ is $\theta$-tight to $A_i$.
		\end{itemize}
	\end{lemma}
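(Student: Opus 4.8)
The plan is to run an iterative \dd clean-up\ee{} procedure on $G$, at each stage either certifying that the current graph (after removing few vertices and peeling off a bounded number of $\vep$-restricted pieces) has few enough copies of $H$ that Nikiforov's theorem applies directly, or else extracting a new tight pair $(A_i,B_i)$ of the required shape and recursing on what remains. To set this up, fix an integer $d\ge 0$ and a graph $G$ with $\ind(G)\le\kappa d^h$ for a $\kappa$ to be chosen at the end. The key observation, which drives the induction, is a statement of the form: \emph{for each $k\in\{0,1,\ldots,\binom h2\}$, if $G$ is a graph with $\ind(G)\le\kappa_k d^h$ that contains no family of $k+1$ pairwise-disjoint ``almost tight'' pairs with the size constraints of the lemma, then one can remove at most some $d_k\le d$ vertices and obtain a graph of bounded ``Ramsey-type complexity'' that \cref{thm:niki} (in its weak form \cref{thm:nikiweak}) partitions into boundedly many weakly $O(\vep)$-restricted sets.} I would prove this by downward induction on $k$, with the base case $k=\binom h2$ being the key input: by \cref{lem:counting}, a graph with many vertices in each of $h$ disjoint blocks and \emph{no} tight pair between any two blocks has $\gg\kappa\abs G^h$ copies of $H$, a contradiction once $\abs G$ is large compared to $d$; hence in the extremal situation $G$ itself must be (weakly) $\vep$-restricted after deleting $O(d)$ vertices.

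The heart of the argument is the inductive step. Suppose we are looking for the $(i{+}1)$-st pair and $G$ is the current residual graph with $\ind(G)\le\kappa_i d^h$. Apply \cref{cor:counting} (with a suitably small multiple of $\theta$ in place of $\vep$): either $\ind(G)>\kappa_{\ref{cor:counting}}(H,\cdot)\abs G^h$, which combined with the hypothesis $\ind(G)\le\kappa_i d^h$ forces $\abs G\le c\, d$ and we are done by putting everything into $S$; or we obtain disjoint $A,B\subset V(G)$, each of size $\ge(2h)^{-2}\theta'^{\,h-1}\abs G$, with $B$ being $\theta'$-tight to $A$. Now $\abs B$ may be much larger than $\eta\abs A$, so I would shrink $B$ down to size $\floor{\eta\abs A}$, keeping the tightness (tightness to $A$ is inherited by subsets of $B$), and set $A_{i+1}:=A$, $B_{i+1}:=$ this shrunken set. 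The leftover part $B\setminus B_{i+1}$ is thrown back into the pool. Then recurse on $G':=G\setminus(A_{i+1}\cup B_{i+1})$: since $\abs{A_{i+1}}+\abs{B_{i+1}}\ge(2h)^{-2}\theta'^{\,h-1}\abs G$, we have $\ind(G')\le\ind(G)\le\kappa_i d^h\le\kappa_{i+1}(\abs G/\text{const})^h$ with $\kappa_{i+1}:=\kappa_i\cdot((2h)^2\theta'^{-(h-1)})^h$, which is exactly the form needed to continue. Crucially the recursion depth is at most $\binom h2$ — because if we ever extracted $\binom h2+1$ disjoint tight pairs we could, after discarding the $B$'s, find $h$ disjoint sets of comparable size with no tight pair among them (one picks the sets greedily from the $A$'s, using that each extracted pair can only ``block'' one index), contradicting \cref{lem:counting} as in the base case. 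So after at most $\binom h2$ rounds we arrive at a residual graph $G^\ast$ with $\ind(G^\ast)\le\kappa^\ast\abs{G^\ast}^h$ for a bounded $\kappa^\ast$ and no further tight pair, whence $\abs{G^\ast}=O(d)$ — actually we want $G^\ast$ itself $(N',O(\vep))$-restricted, so instead I apply \cref{thm:niki}/\cref{thm:nikiweak} repeatedly to carve $G^\ast$ into $n\le N$ weakly $O(\vep)$-restricted sets $C_1,\ldots,C_n$, putting any sub-$d$ remainder into $S$; finally convert weakly $O(\vep)$-restricted to $\vep$-restricted by passing to half-size subsets and absorbing the discarded halves into $S$ as well, which is affordable since the number of rounds is bounded.

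The bookkeeping on $S$ is the routine part: across the $O(\binom h2)$ rounds we only ever dump into $S$ either a residual graph of size $\le c_j d$ (finitely many such events, each contributing $O(d)$) or the \dd small halves\ee{} from the weak-to-strong conversion; choosing the constants $\theta',\vep',\ldots$ small enough and $\kappa$ small enough makes the total at most $d$. The constants $N$ and $\kappa^{-1}$ come out as iterated exponentials: each application of \cref{cor:counting} multiplies $\kappa^{-1}$ by a factor of the shape $\theta^{-O(h)}$, and each application of \cref{thm:nikiweak} contributes a tower (that is where the tower-of-twos height enters), but since the number of rounds depends only on $h$ the final bound is a tower of height $O(h)$ with $\vep^{-1},\eta^{-1},\theta^{-1}$ on top, as claimed in the introduction.

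The main obstacle I anticipate is making the recursion genuinely \dd close up\ee{}: I need the size lower bounds from \cref{cor:counting} to degrade in a controlled, multiplicative way so that after $\binom h2$ rounds the residual $\kappa^\ast$ is still bounded by a function of $H,\vep,\eta,\theta$ only — this forces the precise choice of the auxiliary density parameters (some small constant times $\theta$, and a separate small constant governing how much we shrink $B$) and of the relation $\kappa_{i+1}=\kappa_i\cdot\theta^{-O(h)}$. A secondary subtlety is that the $A_i$'s handed back by \cref{cor:counting} are not themselves required to be $\vep$-restricted — but the lemma only asks the \emph{final} $A_1,\ldots,A_m$ to be $\vep$-restricted, so after all pairs are extracted I must make a final pass, applying \cref{thm:nikiweak} \emph{inside each} $A_i$ as well, splitting $A_i$ into boundedly many weakly $O(\vep)$-restricted pieces, relabeling all but one of them as extra $C$-sets, and carrying the tight pair $(B_i,\text{that one remaining piece of }A_i)$ — this requires checking that $\theta$-tightness of $B_i$ to $A_i$ survives restriction to a linear-sized subset of $A_i$ with the density blown up by the reciprocal of the proportion kept, which is why one wants $\theta$ comfortably smaller than $\eta$ and the number of $A_i$-pieces bounded.
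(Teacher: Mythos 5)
Your approach departs from the paper's: instead of growing a partial blowup $D_1,\ldots,D_t$ of $H$ inside $G$ (with the pairs $(A_i,B_i)$ arising as pieces shaved off as $t$ increases), you try to extract tight pairs directly via \cref{cor:counting} and bound the recursion depth by a Ramsey-type argument. This has a genuine gap at the crucial step.

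Your justification that the recursion depth is at most $\binom h2$ — that $\binom h2+1$ disjoint tight pairs would yield $h$ disjoint $A_i$'s with no tight pair among them, contradicting \cref{lem:counting} — does not hold. The extracted pairs certify tightness of $B_i$ to $A_i$ and say nothing about the absence of tight pairs between distinct $A_i$'s or inside them. Indeed the situation is the opposite of what a pigeonhole argument would need: tight pairs are plentiful (that is precisely why \cref{cor:counting} keeps producing them), and in a generic graph one can extract disjoint tight pairs until the vertex set is exhausted; nothing caps the number at $\binom h2$. In the paper, $m\le\binom h2$ is a bookkeeping consequence of a different iteration: the third row $D_1,\ldots,D_t$ is an ever-growing blowup of $H[\{v_1,\ldots,v_t\}]$, and the step from $t$ to $t+1$ moves one new pair $(D_i\setminus P_i,L_i)$ into the first row for each of the $t$ existing $D_i$'s, giving $0+1+\cdots+(h-1)=\binom h2$. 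Your iteration has no analogue of the $D_i$'s, which also removes the paper's termination mechanism — reaching $t=h$ and deducing via \cref{cor:copies} that $G$ has more than $\kappa d^h$ copies of $H$ — so even the claim that the process stops is unsupported.

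The secondary issue you flag is also worse than you suggest. Splitting $A_i$ with \cref{thm:nikiweak} and keeping a piece of size $\ge\delta\abs{A_i}$ degrades the tightness of $B_i$ from $\theta'$ to $\theta'/\delta$, where $\delta=\delta_{\ref{thm:niki}}(H,\vep)$ can be tiny; since $\theta$ is prescribed in advance, you would need $\theta'\le\delta\theta$ from the start, which in turn shrinks the size guarantee from \cref{cor:counting} and the error compounds. The paper avoids this entirely by making each $D_i$ $\vep'$-restricted from the moment it is created (via \cref{cor:niki1}), and by pairing $L_i$ with $D_i\setminus P_i$ where $\abs{P_i}\le\frac12\abs{D_i}$, so that tightness only degrades by a fixed factor $2$ rather than $\delta^{-1}$.
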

	This contains~\cite[Theorem 1.5]{chudnovsky2021-1} as a special case with $d=0$, and already gives \cref{thm:niki} with $\vep=\eta=\theta$ and $d=\vep\abs G$.
	We shall employ the same approach as in~\cite[Section 2]{chudnovsky2021-1},
	and recommend reading the detailed sketch there first.
	Here we explain the modifications.
	
	We recall some definitions.
	For $c,\vep>0$ and a graph $G$, a pair $(A,B)$ of disjoint nonempty subsets of $V(G)$ is {\em $(c,\vep)$-full} in $G$ if for every $A_1\subset A$ and $B_1\subset B$ with $\abs{ A_1}\ge c\abs A$ and $\abs{B_1}\ge c\abs B$,
	$G$ has at least $\vep\abs {A_1}\abs {B_1}$ edges between $A_1,B_1$;
	and $(A,B)$ is {\em $(c,\vep)$-empty} in $G$ if it is $(c,\vep)$-full in $\overline{G}$.
	Thus for every $c'>c$ and every $A'\subset A$ and $B'\subset B$ with $\abs {A'}\ge c'\abs{A}$ and $\abs{B'}\ge c'\abs{B}$,
	$(A',B')$ is $(c/c',\vep)$-full if $(A,B)$ is $(c,\vep)$-full and is $(c/c',\vep)$-empty if $(A,B)$ is $(c,\vep)$-empty.
	A collection $\{D_1,\ldots,D_h\}$ of disjoint nonempty subsets of $V(G)$ is a {\em $(c,\vep)$-blowup} of $H$ if for all distinct $i,j\in[h]$,
	$(D_i,D_j)$ is $(c,\vep)$-full if $v_iv_j\in E(H)$
	and is $(c,\vep)$-empty if $v_iv_j\nin E(H)$.
	
	In proving~\cref{lem:main}, we shall be concerned with partitions of $V(G)$ into \dd rows\ee{} of subsets and pairs of subsets~as~follows:
	\begin{itemize}
		\item {\em first row}: pairs $(A_1,B_1),\ldots,(A_m,B_m)$ for some $m\ge0$
		such that for all $i\in[m]$, $A_i$ is $\vep$-restricted,
		$B_i$ is very tight to $A_i$ and has size smaller than a tiny fraction of $A_i$
		($B_i$ might be empty);
		
		\item {\em second row}: $\vep$-restricted nomempty sets $C_1,\ldots,C_n$ for some $n\ge0$;
		
		\item {\em third row}: 
		$\vep'$-restricted nonempty sets $D_1,\ldots,D_t$
		for some $t$ with $0\le t\le h$, such that $\{D_1,\ldots,D_t\}$ is a $(c,\xi)$-blowup of $H[\{v_1,\ldots,v_t\}]$
		for some appropriately chosen $c,\vep',\xi>0$; and
		
		\item {\em fourth row}: the set $L$ of \dd leftover\ee{} vertices such that whenever $t>0$,
		$L$ has size smaller than a tiny fraction of each $D_i$.
	\end{itemize}
	
	Such a partition certainly exists, with $m=n=t=0$ and $L=V(G)$.
	Starting from $t=0$ with this partition, we shall attempt to increase $t$ one by one for at most $h$ steps.
	Let $S$ be the set of vertices in $L$ with the \dd correct adjacencies\ee{} to the collection $\{D_1,\ldots,D_t\}$,
	that is, those having at least a small fraction of neighbors in $D_i$ if $v_{t+1}v_i\in E(H)$ and at least a small fraction of nonneighbors in $D_i$ if $v_{t+1}v_i\nin E(H)$.
	Then $L\setminus S$ can be partitioned into (possibly empty) sets $L_1,\ldots,L_t$ such that $L_i$ is (very) tight to $D_i$ for every $i\in[t]$.
	As the notation suggests, if $\abs S\le d$ then we stop the iteration and rearrange the sets $A_1,\ldots,A_m$, $B_1,\ldots,B_m$, $C_1,\ldots,C_n$, $D_1,\ldots,D_t$, $L_1,\ldots,L_t$
	to form a partition of $V(G)\setminus S$ with the desired property (this is not hard, and the bounds on $m$ and $n$ will come up later).
	
	So let us assume $\abs S>d$.
	We can then apply \cref{thm:niki} to find an $\vep'$-restricted subset $S_0$ of~$S$.
	Keeping in mind that $\{D_1,\ldots,D_t,S_0\}$ now form a \dd partial\ee{} blowup of $H[\{v_1,\ldots,v_t,v_{t+1}\}]$,
	we iteratively construct a nested sequence $S_0\supset S_1\supset\ldots\supset S_t$ and subsets $P_1\subset D_1,\ldots,P_t\subset D_t$
	such that each pair $(S_i,P_i)$ is reasonably full (if $v_{t+1}v_i\in E(H)$) or reasonably empty (if $v_{t+1}v_i\nin E(H)$);
	then the collection $\{P_1,\ldots,P_t,S_{t}\}$ will be a sufficiently good blowup of $H[\{v_1,\ldots,v_t,v_{t+1}\}]$
	while $P_1,\ldots,P_t,S_t$ are still $\vep'$-restricted (for suitable $c,\vep',\xi$).
	To execute this process, we need the following useful theorem of Yuejian, R\"odl, and Ruci\'nski~\cite[Theorem 1.3]{MR1887082} which allows us to extract decent fullness/emptiness from moderate denseness/sparseness.
	(We remark that~\cite[Theorem 1.3]{MR1887082} was stated only for balanced bipartite graphs; but the proof there works equally well for unbalanced ones.)
	\begin{lemma}
		[\cite{MR1887082}]
		\label{lem:full}
		Let $c\in(0,1)$ and $\vep\in(0,\frac14)$. Then, for  $\gamma=\gamma_{\ref{lem:full}}(c,\vep):=\frac12(2\vep)^{12/c}\in(0,\frac{1}{3})$,
		the following holds. 
		Let $G$ be a graph with $A,B\subset V(G)$ disjoint and nonempty such that $G$ has at least $2\vep\abs{A}\abs B$ edges between $A$ and $B$.
		Then there exist $A'\subset A$ and $B\subset B'$ with $\abs{A'}\ge\gamma\abs{A}$ and $\abs{B'}\ge\gamma\abs{B}$ such that $(A',B')$ is $(c,\vep)$-full.
	\end{lemma}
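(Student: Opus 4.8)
The plan is to prove \cref{lem:full} by an iterative density-increment argument. Two reductions come first. We may assume $c\le\tfrac12$: for $c>\tfrac12$ a $(\tfrac12,\vep)$-full pair is a fortiori $(c,\vep)$-full, and $\gamma_{\ref{lem:full}}(\tfrac12,\vep)\le\gamma_{\ref{lem:full}}(c,\vep)$ in that range, so the case $c=\tfrac12$ suffices. We may also assume $\abs{A}$ and $\abs{B}$ exceed a threshold depending only on $c,\vep$, since otherwise $\gamma\abs{A}<1$ (say) and, the number of edges between $A$ and $B$ being positive, a single such edge $ab$ yields the $(c,\vep)$-full pair $(\{a\},\{b\})$.

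We build a sequence $(A,B)=(A_0,B_0),(A_1,B_1),\ldots$ with $A_{i+1}\subset A_i$ and $B_{i+1}\subset B_i$, maintaining that the bipartite edge density $d_i$ of $(A_i,B_i)$ is at least $2\vep$ and strictly increases at each step. Given $(A_i,B_i)$: if it is $(c,\vep)$-full we stop and output it — this will be legitimate once we know its sides still have size at least $\gamma\abs{A}$ and $\gamma\abs{B}$. Otherwise there is a sub-rectangle witnessing the failure, and \emph{purifying} it (deleting from one side its vertices of largest degree into the other, then symmetrically) we may take it to be $X\times Y$ with $\abs{X}=\ceil{c\abs{A_i}}$, $\abs{Y}=\ceil{c\abs{B_i}}$ and fewer than $\vep\abs{X}\abs{Y}$ edges inside. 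Partition $A_i\times B_i$ into $X\times Y$, $X\times(B_i\setminus Y)$, $(A_i\setminus X)\times Y$, $(A_i\setminus X)\times(B_i\setminus Y)$; since $\vep\le d_i/2$ the first rectangle is far sparser than the ambient average, so a short averaging computation shows that one of the other three has edge density at least $d_i(1+\tfrac12c^2)$. Because $c\le\tfrac12$, each of these three has both sides of size at least roughly $c\abs{A_i}$ and $c\abs{B_i}$, so we may take $(A_{i+1},B_{i+1})$ to be a dense one among them. Since once the density passes $1-\vep c^2$ the pair is automatically $(c,\vep)$-full, the process halts — outputting a $(c,\vep)$-full pair — after at most $O(c^{-2}\log\vep^{-1})$ steps.

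The main obstacle is to guarantee that the two sides stay above $\gamma\abs{A}$ and $\gamma\abs{B}$ for the whole iteration, since the crude scheme above can shrink a side by a full factor $c$ in one step and so is far too wasteful. The remedy — this is the technical heart of \cite[Theorem 1.3]{MR1887082} — is a finer choice of sub-rectangle at each step. The \dd interior\ee{} rectangle $(A_i\setminus X)\times(B_i\setminus Y)$ shrinks each side only by a factor about $1-c$; whenever its density is already at least $d_i(1+\tfrac13c^2)$ we pass to it at negligible cost. Otherwise the surplus edges are forced into the two \dd cross\ee{} rectangles, and one of these then has density at least $d_i(1+\tfrac1{12}c)$ — a much larger increment, so that steps of this second kind occur at most about $\tfrac{12}{c}\log\vep^{-1}$ times (this is where the constant $12$ enters), even though each costs a factor $c$ on one side. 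Orchestrating the two kinds of steps and pushing the resulting bookkeeping to its sharp form yields $\gamma=\tfrac12(2\vep)^{12/c}$; arranging the constants so that the exponent comes out as exactly $12/c$, rather than a cruder bound with a worse exponent, is the one place where genuine care is required.
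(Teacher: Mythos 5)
The paper does not prove this lemma itself: it is quoted verbatim from Peng, R\"odl, and Ruci\'nski~\cite[Theorem~1.3]{MR1887082}, with only the remark that the balanced-bipartite proof there extends unchanged to unbalanced pairs. So your proposal is necessarily a reconstruction rather than something to be matched against an internal argument, and the density-increment scaffolding you lay out — purify a sparse witness to non-fullness, show one of the remaining rectangles is measurably denser, iterate — is indeed the right general shape for a result of this kind.

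There is, however, a genuine gap precisely at the point you flag as needing ``genuine care,'' and it is not mere unfinished bookkeeping: the orchestration you describe cannot yield $\gamma=\tfrac12(2\vep)^{12/c}$ when $c$ is small. A cross step in your scheme costs a factor $c$ on one side while buying only a density factor $1+\tfrac{c}{12}$; the exchange rate $\abs{\ln c}/\ln(1+\tfrac{c}{12})\approx 12\abs{\ln c}/c$ exceeds the target exponent $12/c$ as soon as $c<1/e$. Since the density budget permits up to roughly $\tfrac{12}{c}\ln\tfrac{1}{2\vep}$ cross steps and nothing in the argument prevents them from charging the same side (even an even split still fails once $c<e^{-2}$), the charged side can fall to about $c^{\Theta(c^{-1}\ln\vep^{-1})}$, which is exponentially below $(2\vep)^{12/c}$. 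Some further idea is required — for instance, choosing the witness sub-rectangle not at minimal proportion $c$ but enlarged so that the retained side shrinks by only about $1-c$ per step — and your sketch does not supply it. A smaller issue: your small-cardinality reduction covers only the case where both $\gamma\abs{A}<1$ and $\gamma\abs{B}<1$; if $\gamma\abs{A}<1\le\gamma\abs{B}$, a single edge does not furnish $\abs{B'}\ge\gamma\abs{B}$, so that case needs its own (easy, but separate) treatment.
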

	Observe that $L$ is nonempty since $S$ is, which implies each $D_i$ is quite large,
	and so we can take each $P_i$ to have size at least a (small) fraction of $D_i$ yet at most half of $D_i$ simultaneously.
	Then each $L_i$ is still quite tight to and tiny compared to $D_i\setminus P_i$;
	and we can move each pair $(D_i\setminus P_i,L_i)$ to the first~row.
	
	Now, we want to use \cref{thm:niki} to pull out as many $\vep$-restricted sets as possible from $S\setminus S_t$ (assuming this is nonempty)
	so that the resulting new \dd leftover\ee{} set $L'$ still has size smaller than a tiny fraction of $S_t$ and of each $P_i$;
	then we can move those new restricted sets to the third row.
	A potential issue here is that \cref{thm:niki} may not be applicable if $S\setminus S_t$ is not large enough while most of the copies of $H$ in $G$ are \dd concentrated\ee{} on $G[S\setminus S_t]$.
	This can be avoided, conveniently, by making sure that $\abs{S_0}$ is not too large compared to $\abs S$ right in the first place (if $\abs S\ge2$), which will be done by the following simple corollary of \cref{thm:niki} itself
	(we believe this is well-known, but still include a proof for completeness).
	\begin{corollary}
		\label{cor:niki1}
		For every $\vep>0$ and every graph $H$, there exist $\delta=\delta_{\ref{cor:niki1}}(H,\vep)\in(0,\frac{1}{4})$ and $\kappa=\kappa_{\ref{cor:niki1}}(H,\vep)>0$ such that for every graph $G$ with $\ind(G)\le\kappa\abs{G}^h$,
		$G$ has an $\vep$-restricted set $T$ with $\abs T=\ceil{\delta\abs G}$;
		in particular $\abs{T}=1$ if $\abs{G}=1$ and $\abs{G\setminus T}\ge\frac{1}{2}\abs{G}$ if $\abs{G}\ge2$.
	\end{corollary}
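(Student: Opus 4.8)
The plan is to deduce this from a single application of \cref{thm:niki} with parameter $\vep/2$. Put $\delta_1:=\delta_{\ref{thm:niki}}(H,\vep/2)$ and $\kappa:=\kappa_{\ref{thm:niki}}(H,\vep/2)$, and set $\delta:=\tfrac12\min(\tfrac14,\delta_1)\in(0,\tfrac14)$; these will be $\delta_{\ref{cor:niki1}}(H,\vep)$ and $\kappa_{\ref{cor:niki1}}(H,\vep)$. Fix $G$ with $\ind(G)\le\kappa\abs G^h$ and set $k:=\ceil{\delta\abs G}$. If $\delta\abs G\le1$ (in particular if $\abs G\le1$), then $k\le1$ and any subset of $V(G)$ with at most one vertex is $\vep$-restricted and of the right size, so assume $\delta\abs G>1$. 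Then $2\le k\le\delta\abs G+1<2\delta\abs G\le\delta_1\abs G$. By \cref{thm:niki} there is a $\tfrac\vep2$-restricted $S\subset V(G)$ with $\abs S\ge\delta_1\abs G>k$. Since $S$ is $\tfrac\vep2$-restricted, one of $G[S],\overline G[S]$ has maximum degree at most $\tfrac\vep2\abs S$; as the remaining goal --- finding an $\vep$-restricted subset of $V(G)$ of size exactly $k$ --- is unchanged when $G$ is replaced by $\overline G$ (and the hypothesis on $\ind(G)$ will not be used again), we may assume $G[S]$ has maximum degree at most $\tfrac\vep2\abs S$. Write $\Delta$ for maximum degree below.

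It then remains only to extract from $S$ an $\vep$-restricted subset of size exactly $k$; this is the one real step, because one cannot simply take an arbitrary $k$-element subset of $S$ (which may be far larger than $k$, and $\vep$-restrictedness is not inherited by prescribed-size subsets). If $\abs S<2k$ there is in fact nothing to do: any $k$-element $T\subset S$ satisfies $\Delta(G[T])\le\Delta(G[S])\le\tfrac\vep2\abs S<\vep k$. Suppose then $\abs S\ge2k$, and run the following local-swap process. Start with an arbitrary $k$-element $T\subset S$. While $\Delta(G[T])>\vep k$, choose $v\in T$ with $\deg_{G[T]}(v)=\Delta(G[T])$; since $\abs{S\setminus T}=\abs S-k\ge\tfrac12\abs S$ and the number of edges of $G$ between $S\setminus T$ and $T\setminus\{v\}$ is at most $(k-1)\cdot\tfrac\vep2\abs S$, some $v'\in S\setminus T$ has at most $(k-1)\vep<\vep k$ neighbours in $T\setminus\{v\}$; replace $T$ by $T':=(T\setminus\{v\})\cup\{v'\}$, which is again a $k$-element subset of $S$ with $\abs{E(G[T'])}=\abs{E(G[T])}-\deg_{G[T]}(v)+\deg_{G[T']}(v')<\abs{E(G[T])}$, so $\abs{E(G[T])}$ drops by at least one. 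As $\abs{E(G[T])}$ is a non-negative integer, the process halts after at most $\binom k2$ swaps, and it can only halt at a $k$-element $T\subset S$ with $\Delta(G[T])\le\vep k$, i.e.\ at an $\vep$-restricted set of size exactly $k$; this $T$ is the desired set.

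The two ``in particular'' clauses are then immediate. If $\abs G=1$ then $\delta\abs G=\delta<1$, so $k=1$. If $\abs G\ge2$ then $\abs T=k\le\delta\abs G+1<\tfrac14\abs G+1$, which is at most $\tfrac12\abs G$ once $\abs G\ge4$, while for $\abs G\in\{2,3\}$ one has $k=1$ and $\abs{G\setminus T}=\abs G-1\ge\tfrac12\abs G$; so in every case $\abs{G\setminus T}=\abs G-k\ge\tfrac12\abs G$. I expect the local-swap argument to be the crux: everything else is bookkeeping around a single use of \cref{thm:niki}, but that step is precisely where the failure of $\vep$-restrictedness to be monotone under taking subsets has to be circumvented, and the thing that makes it work is the monovariant --- swapping a vertex of above-threshold degree in $T$ for a typical vertex outside $T$ strictly decreases $\abs{E(G[T])}$.
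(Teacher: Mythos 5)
Your proof is correct, and the key step --- the local-swap process --- takes a genuinely different route from the paper's. The paper instead passes through the \emph{weak} (edge-density) notion of restrictedness: it applies \cref{thm:niki} with parameter $\frac18\vep$ to get an $\frac18\vep$-restricted $U$, observes that $U$ is then weakly $\frac14\vep$-restricted, uses averaging (edge density only drops on average when you pass to random subsets) to find a weakly $\frac14\vep$-restricted subset $U'$ of exact size $\ceil{2\delta\abs G}$, and finally invokes the standard fact quoted after \cref{thm:csss} that a weakly $\frac14\vep$-restricted set has an $\vep$-restricted subset of exactly half its size. That route is more modular: both translations between maximum-degree and edge-density restrictedness are cited as known facts, making the proof two lines. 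Your approach never leaves the max-degree world: you apply \cref{thm:niki} once with $\frac\vep2$ and then run an exchange argument with the monovariant $\abs{E(G[T])}$, swapping a highest-degree vertex of $T$ for a vertex of $S\setminus T$ with few neighbours in $T$, which strictly decreases the edge count and must therefore terminate at an $\vep$-restricted $k$-subset. This is self-contained (it never needs the weak/strong translation lemmas or the halving trick), and it directly handles the reason an arbitrary $k$-subset will not do, at the cost of being somewhat longer and requiring a bit more care with the integrality of the monovariant (which you do handle correctly, since $\deg_{G[T]}(v)>\vep k>(k-1)\vep\ge\deg_{G[T']}(v')$ forces a drop of at least one). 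The ``in particular'' bookkeeping and the choice of $\delta$ are fine in both.
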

	\begin{proof}
		Let $\delta:=\frac{1}{4}\cdot\delta_{\ref{thm:niki}}(H,\frac{1}{8}\vep)$
		and $\kappa:=\kappa_{\ref{thm:niki}}(H,\frac{1}{8}\vep)$.
		By~\cref{thm:niki}, $G$ has an $\frac{1}{8}\vep$-restricted set $U$ with $\abs U\ge2\delta\abs{G}$;
		in particular $U$ is weakly $\frac{1}{4}\vep$-restricted.
		By averaging, there is a weakly $\frac{1}{4}\vep$-restricted subset $U'$ of $U$ such that $\abs {U'}=\ceil{2\delta\abs G}$,
		and so there is $T\subset U'$ with $\abs{T}=\ceil{\frac{1}{2}\abs{U'}}
		=\ceil{\frac{1}{2}\ceil{2\delta\abs G}}
		=\ceil{\delta\abs{G}}$
		such that $T$ is $\vep$-restricted in $G$.
		In particular, if $\abs{G}\le4$ then $\abs{T}=1$;
		and if $\abs{G}>4$ then
		$\abs{G\setminus T}> \abs{G}-1-\delta\abs{G}
		\ge\frac{3}{4}\abs{G}-\frac{1}{4}\abs{G}=\frac{1}{2}\abs{G}$.
		This proves \cref{cor:niki1}.
	\end{proof}
	For $\delta,\eta\in(0,1)$, let $\phi(\delta,\eta)$ be the least integer $p\ge1$ with $(1-\delta)^p\le\eta$;
	then $\phi(\delta,\eta)\le\delta^{-1}\log\eta$.
	The next corollary of \cref{thm:niki} formalizes the process of repeatedly pulling out $\vep$-restricted sets from~$S\setminus S_t$.
	\begin{corollary}
		\label{cor:niki2}
		For every $\vep,\eta\in(0,1)$, for every graph $H$,
		and for $\delta:=\delta_{\ref{thm:niki}}(H,\vep)>0$,
		there exists $\kappa=\kappa_{\ref{cor:niki2}}(H,\vep,\eta)>0$
		such that for every graph $G$ with $\ind(G)\le\kappa\abs{G}^h$,
		there is $T\subset V(G)$
		with $\abs{T}\le\eta\abs{G}$
		such that $G\setminus T$ is $(\phi(\delta,\eta),\vep)$-restricted.
	\end{corollary}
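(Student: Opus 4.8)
The plan is to iterate \cref{thm:niki} a bounded number of times, tracking carefully how $\ind(\cdot)$ scales down at each step so that the hypothesis of \cref{thm:niki} remains available. Let $\delta:=\delta_{\ref{thm:niki}}(H,\vep)$ and set $p:=\phi(\delta,\eta)$, so $p\le\delta^{-1}\log\eta^{-1}$ is a fixed integer depending only on $H,\vep,\eta$. I would choose $\kappa:=\kappa_{\ref{cor:niki2}}(H,\vep,\eta)$ small enough that after $p$ rounds of removals the copy-count bound is never violated; concretely, taking $\kappa:=\kappa_{\ref{thm:niki}}(H,\vep)$ works, because the key monotonicity is that passing to an induced subgraph can only decrease $\ind(\cdot)$, while $\abs{G}^h$ may also decrease — so I must be slightly more careful and instead argue round by round that the \emph{current} graph $G_j$ still satisfies $\ind(G_j)\le\kappa_{\ref{thm:niki}}(H,\vep)\abs{G_j}^h$. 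This holds trivially since $\ind(G_j)\le\ind(G)\le\kappa\abs G^h$ and, at each round, $\abs{G_j}$ is still a constant fraction of $\abs{G}$ (as long as we have not yet removed everything); one sets $\kappa$ accordingly. Let me describe the iteration.

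I would build a decreasing sequence $V(G)=R_0\supset R_1\supset\cdots\supset R_p$ together with disjoint $\vep$-restricted sets $T_1,\ldots,T_p$ as follows. Given $R_{j-1}$, if $R_{j-1}=\emptyset$ set $R_j:=\emptyset$ and $T_j:=\emptyset$. Otherwise apply \cref{thm:niki} to $G[R_{j-1}]$ — which is legitimate because $\ind(G[R_{j-1}])\le\ind(G)\le\kappa\abs G^h$, and choosing $\kappa:=\kappa_{\ref{thm:niki}}(H,\vep)\cdot(\eta)^{h}$ (say) guarantees $\kappa\abs G^h\le\kappa_{\ref{thm:niki}}(H,\vep)\abs{R_{j-1}}^h$ whenever $\abs{R_{j-1}}\ge\eta\abs G$; and if instead $\abs{R_{j-1}}<\eta\abs G$ we are already done, since then $T:=V(G)\setminus R_{j-1}$ satisfies $\abs T>(1-\eta)\abs G$ — wait, that is the wrong direction. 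The clean fix: stop the iteration as soon as $\abs{R_{j-1}}\le\eta\abs G$, declare $T:=V(G)\setminus R_{j-1}=T_1\cup\cdots\cup T_{j-1}$, and observe that $R_{j-1}$ itself is what remains. Hmm — but we want $\abs T\le\eta\abs G$, i.e. we want the \emph{removed} part small, so the roles are reversed. Let me restate: we iterate extracting $\vep$-restricted sets $T_j$ from the leftover $R_{j-1}$, and $T:=R_p$ will be the final leftover; we need $\abs{R_p}\le\eta\abs G$, and $G\setminus T=G[T_1\cup\cdots\cup T_p]$ should be $(p,\vep)$-restricted with $p=\phi(\delta,\eta)$.

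So the real iteration is: as long as $R_{j-1}\ne\emptyset$, apply \cref{thm:niki} to $G[R_{j-1}]$ to get an $\vep$-restricted $T_j\subset R_{j-1}$ with $\abs{T_j}\ge\delta\abs{R_{j-1}}$, and set $R_j:=R_{j-1}\setminus T_j$, so $\abs{R_j}\le(1-\delta)\abs{R_{j-1}}$. After $p=\phi(\delta,\eta)$ rounds, $\abs{R_p}\le(1-\delta)^p\abs G\le\eta\abs G$. Set $T:=R_p$. Then $V(G)\setminus T=T_1\cup\cdots\cup T_p$ is a partition into $p=\phi(\delta,\eta)$ sets each $\vep$-restricted in $G$ — but one must be careful that "$\vep$-restricted in $G[R_{j-1}]$" upgrades to "$\vep$-restricted in $G$": this is false in general for the max-degree notion, since vertices of $T_j$ may have many neighbors outside $R_{j-1}$. \textbf{This is the main obstacle}, and I expect it is handled exactly as in the passage preceding \cref{cor:niki1} (the ``with more care'' remark after \cref{thm:weak}, and the weakly-restricted trade-off in the introduction): one works with \emph{weakly} $\vep$-restricted sets during the iteration, or one absorbs the cross-edges by a further bounded refinement, or — most likely in this paper's framework — the statement of $(\,\cdot\,,\vep)$-restricted here is with respect to the induced subgraph $G\setminus T$ only, in which case $T_j$ is measured inside $G[T_1\cup\cdots\cup T_p]$, not $G[R_{j-1}]$, and a short argument (each $T_j$ is $\vep$-restricted in $G[R_{j-1}]\supset G[T_1\cup\cdots\cup T_p\cup R_p\setminus\text{earlier}]$, hence a fortiori in the smaller induced subgraph, since deleting vertices only decreases degrees and the threshold $\vep\abs{T_j}$ is unchanged) finishes it. I would spell out this last monotonicity carefully, fix the value of $\kappa$ to make every application of \cref{thm:niki} legal (taking $\kappa:=\kappa_{\ref{thm:niki}}(H,\vep)\cdot\eta^h$ suffices, since $\abs{R_{j-1}}>\eta\abs G$ whenever round $j$ is performed), and conclude.
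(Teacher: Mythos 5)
Your approach is essentially the paper's: take $\kappa=\eta^h\kappa_{\ref{thm:niki}}(H,\vep)$, iteratively strip off $\vep$-restricted sets of size at least a $\delta$-fraction of the current leftover, stop as soon as the leftover drops to size at most $\eta\abs G$, and observe that this can take at most $\phi(\delta,\eta)$ rounds. One small correction: the ``main obstacle'' you flag is not actually there. By the paper's definition, $S\subset V(G)$ is $\vep$-restricted in $G$ iff $G[S]$ or $\overline G[S]$ has maximum degree at most $\vep\abs S$ — this depends only on the induced subgraph $G[S]$, not on vertices outside $S$, so ``$\vep$-restricted in $G[R_{j-1}]$'' and ``$\vep$-restricted in $G$'' are the same statement for a set contained in $R_{j-1}$; no upgrade argument is needed. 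You do eventually land on the right resolution, but the hesitation (and the aside about weakly-restricted variants) is unnecessary.
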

	\begin{proof}
		Let $\kappa:=\eta^{h}\cdot\kappa_{\ref{thm:niki}}(H,\vep)$.
		We may assume $\abs G\ge1$.
		Let $U_0:=V(G)$;
		and for $i\ge0$, as long as $U_i$ is defined and $\abs{U_i}>\eta\abs G$, let $U_{i+1}\subset U_i$ such that $U_{i}\setminus U_{i+1}$ is $\vep$-restricted and $\abs{U_i\setminus U_{i+1}}\ge\delta\abs{U_i}$,
		which is possible by \cref{thm:niki}~since \[\ind(G[U_i])
		\le \kappa\abs{G}^h= \kappa_{\ref{thm:niki}}(H,\vep)\cdot(\eta\abs{G})^h
		< \kappa_{\ref{thm:niki}}(H,\vep)\cdot\abs{U_i}^h.\]
		This produces a chain of sets $V(G)=U_0\supset U_1\supset\ldots\supset U_n$ for some $p\ge1$
		such that $\abs{U_{i+1}}\le (1-\delta)\abs{U_{i}}\le(1-\delta)^{i+1}\abs{G}$ 
		and $\abs{U_i}>\eta\abs{G}$ for all $i\in\{0,1,\ldots,p-1\}$.
		In particular $\eta\abs{G}<\abs{U_{n-1}}\le(1-\delta)^{n-1}\abs{G}$;
		thus $p-1<\phi(\delta,\eta)$ and so $p\le \phi(\delta,\eta)$.
		Let $T:=U_p$;
		then $\bigcup_{i=1}^{p}(U_i\setminus U_{i-1})$ is a partition of $V(G)\setminus T$ into $p$ subsets which are $\vep$-restricted in $G$.
		This proves \cref{cor:niki2}.
	\end{proof}
	Now assume we have reached $t=h$ and obtained a decent blowup $\{D_1,\ldots,D_h\}$ of $H$.
	Observe that to be able to reach $t=h$ means the \dd exceptional\ee{} set $S$ in each step always had size more than $d$;
	so it is not hard to see that each $\abs{D_i}$ is still more than a (tiny) fraction of $d$.
	It thus suffices to apply the following,
	which is a direct corollary of \cref{lem:counting} and is an analogue of the induced counting lemma~\cite[Lemma~3.2]{MR1804820}.
	\begin{corollary}
		\label{cor:copies}
		Let $\vep\in(0,\frac{1}{2})$, let $H$ be a graph, and
		let $G$ be a graph with an $(\vep^{h},\vep)$-blowup $\{D_1,\ldots,D_h\}$ of $H$.
		Then there are at least $(1-\vep)^{h-1}\vep^{{h\choose2}}\abs {D_1}\cdots\abs {D_h}$ copies $\varphi$ of $H$ in $G$ with $\varphi(v_i)\in D_i$ for all $i\in[h]$.
	\end{corollary}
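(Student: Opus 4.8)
The plan is to derive \cref{cor:copies} directly from \cref{lem:counting} by specializing all of its parameters to $\vep$. Concretely, I would apply \cref{lem:counting} to the sets $D_1,\ldots,D_h$ (which are disjoint and nonempty by the definition of a blowup) with $\vep_t:=\vep$ and $\delta_t:=\vep$ for every $t\in[h-1]$; since $\vep\in(0,\frac{1}{2})\subset(0,1)$, these are admissible choices. Under this specialization the conclusion of \cref{lem:counting} reads: there are at least $\prod_{t=1}^{h-1}(1-\vep)\vep^{t}\cdot\prod_{i=1}^{h}\abs{D_i}=(1-\vep)^{h-1}\vep^{{h\choose2}}\cdot\prod_{i=1}^{h}\abs{D_i}$ copies $\varphi$ of $H$ with $\varphi(v_i)\in D_i$ for all $i\in[h]$, using $\sum_{t=1}^{h-1}t={h\choose2}$. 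This is precisely the claimed bound, so everything reduces to verifying that the hypothesis of \cref{lem:counting} holds for this choice of parameters.

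To that end, fix indices $i<j$ in $[h]$ and suppose toward a contradiction that there exist $A\subset D_i$ and $B\subset D_j$ with $\abs{A}\ge\prod_{t=j}^{h-1}\vep\cdot\abs{D_i}=\vep^{h-j}\abs{D_i}$ and $\abs{B}\ge\frac{\vep}{j-1}\prod_{t=j}^{h-1}\vep\cdot\abs{D_j}=\frac{\vep^{h-j+1}}{j-1}\abs{D_j}$ such that $B$ is $\vep$-sparse to $A$ if $v_iv_j\in E(H)$ and $\vep$-dense to $A$ if $v_iv_j\nin E(H)$. The only point needing care is that these two lower bounds are already at least $\vep^{h}\abs{D_i}$ and $\vep^{h}\abs{D_j}$ respectively: the first is immediate since $0\le h-j\le h$ and $\vep<1$, and for the second, since $\vep<\frac{1}{2}$ and $j\ge2$ we have $\vep^{1-j}>2^{j-1}\ge j-1$, hence $\frac{\vep^{h-j+1}}{j-1}>\vep^{h}$. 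Thus $\abs{A}\ge\vep^{h}\abs{D_i}$ and $\abs{B}\ge\vep^{h}\abs{D_j}$, so the defining property of an $(\vep^{h},\vep)$-blowup applies to the pair $(A,B)$. If $v_iv_j\in E(H)$ then $(D_i,D_j)$ is $(\vep^{h},\vep)$-full, so $G$ has at least $\vep\abs{A}\abs{B}$ edges between $A$ and $B$; but $B$ being $\vep$-sparse to $A$ forces fewer than $\sum_{b\in B}\vep\abs{A}=\vep\abs{A}\abs{B}$ such edges, a contradiction. Symmetrically, if $v_iv_j\nin E(H)$ then $(D_i,D_j)$ is $(\vep^{h},\vep)$-empty, that is, $(\vep^{h},\vep)$-full in $\overline G$, which contradicts $B$ being $\vep$-dense to $A$ (equivalently, $\vep$-sparse to $A$ in $\overline G$). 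This verifies the hypothesis, and \cref{lem:counting} then yields the stated count.

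I do not expect any genuine obstacle here: \cref{cor:copies} is a routine specialization of \cref{lem:counting}, and the single place requiring even a one-line argument is the elementary inequality $\frac{\vep^{h-j+1}}{j-1}\ge\vep^{h}$ for $\vep<\frac{1}{2}$ and $2\le j\le h$ — this is exactly where the hypothesis $\vep<\frac{1}{2}$ is used rather than merely $\vep<1$. It is also worth recording explicitly that a blowup consists of disjoint nonempty sets, so that \cref{lem:counting} applies verbatim.
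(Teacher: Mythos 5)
Your proof is correct and follows the same route as the paper's: apply \cref{lem:counting} with $\vep_t:=\vep$ and check the hypothesis against the blowup definition. The only cosmetic difference is the choice of $\delta_t$ — you take $\delta_t:=\vep$ and use $\vep<\tfrac12$ to verify $\vep^{1-j}>j-1$ in the hypothesis, whereas the paper takes $\delta_t:=t\vep^t$ (so the hypothesis threshold is simply $\vep^{h-1}\abs{D_j}\ge\vep^h\abs{D_j}$) and uses $\vep<\tfrac12$ instead to bound $\delta_t\le\vep$ in the conclusion; both are equally valid.
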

	\begin{proof}
		This follows from \cref{lem:counting} with
		$\vep_t:=\vep$ and $\delta_t:=t\cdot\vep^t$ for all $t\in[h-1]$;
		note that $\delta_t\le t2^{-t+1}\vep\le \vep$ since $\vep\in(0,\frac{1}{2})$.
	\end{proof}
	We are now ready to prove \cref{lem:main}.
	\begin{proof}
		[Proof of \cref{lem:main}]
		Let $\xi:=\frac{1}{4}\theta$ and $\vep_h:=\min(\vep,\xi^{h})$.
		Let $\Gamma_{t,t}=\lambda_{t,t}:=1$;
		and for $t=h-1,h-2,\ldots,0$ in turn,
		do the following:
		\begin{itemize}
			\item for $i=t-1,t-2,\ldots,0$ in turn,
			let $\Gamma_{t,i}:=\lambda_{t,i+1}\Gamma_{t,i+1}$ and $\lambda_{t,i}:=\gamma_{\ref{lem:full}}(\frac{1}{3}\vep_{t+1}\Gamma_{t,i+1},\xi)$; and
			
			\item let $\vep_t:=\vep_{t+1}\lambda_{t,0}$.
		\end{itemize}
		Now, define
		\begin{gather*}
			\vep':=\min_{t\in\{0,1,\ldots,h-1\}}\vep_{t+1}\Gamma_{t,0},
			\qquad 
			\delta':=\delta_{\ref{cor:niki1}}(H,\vep'),\\
			\eta':=\frac{1}{2}\eta\delta'\cdot\min_{t\in\{0,1,\ldots,h-1\}}\Gamma_{t,0},
			\qquad
			N:={h\choose2}+(h-1)\cdot\phi(\delta',\eta').
		\end{gather*}
		Also, for $i=1,2,\ldots,h$ in turn, do the following:
		\begin{itemize}
			\item let $\Lambda_{i,i}:=\delta'\Gamma_{i-1,0}$; and
			
			\item for $t=i,i+1,\ldots,h-1$ in turn, let
			$\Lambda_{t+1,i}:=\lambda_{t,i}\Lambda_{t,i}$.
		\end{itemize}
		Finally, put
		\[\kappa:=\min\left((1-\xi)^{h-1}\xi^{{h\choose2}}\Lambda_{h,1}\cdots\Lambda_{h,h},\,
		\kappa_{\ref{cor:niki1}}(H,\vep'),\,
		2^{-h}\cdot\kappa_{\ref{cor:niki2}}(H,\vep,\eta')\right).\]
		For integers $m,n,t\ge0$ with $t\le h$,
		an {\em $(m,n,t)$-partition} in $G$ is a partition of $V(G)$ into (not necessarily nonempty) subsets
		\[A_1,\ldots,A_m;\,B_1,\ldots,B_m;\,C_1,\ldots,C_n;\,D_1,\ldots,D_t;\,L\]
		such that
		\begin{itemize}
			\item $m\le{t\choose2}$ and $n\le t\cdot\phi(\delta',\eta')$;
			
			\item $A_1,\ldots,A_m,C_1,\ldots,C_n$ are nonempty and $\vep$-restricted;
			
			\item for every $i\in[m]$, $\abs{B_i}\le\eta\abs{A_i}$ and $B_i$ is $\theta$-tight to $A_i$;
			
			\item $\{D_1,\ldots,D_t\}$ is an $(\vep_t,\xi)$-blowup of $H[\{v_1,\ldots,v_t\}]$; and
			
			\item if $t>0$, then $\abs{D_i}>\max(\Lambda_{t,i}d,2\eta^{-1}\abs{L})$ and $D_i$ is $\vep_t$-restricted for every $i\in[t]$.
		\end{itemize}
		For the readers' convenience, let us write such a partition as follows
		\begin{gather*}
			(A_1,B_1),\ldots,(A_m,B_m);\\
			C_1,\ldots,C_n;\\
			D_1,\ldots,D_t;\\
			L.
		\end{gather*}
		
		Observe that $V(G)$ itself is a $(0,0,0)$-partition in $G$.
		Thus, there is $t\in\{0,1,\ldots,h\}$ maximal such that there is an $(m,n,t)$-partition in $G$.
		If $t=h$, then $\{D_1,\ldots,D_h\}$ would be a $(\xi^{h-1},\xi)$-blowup of $H$;
		so by \cref{cor:copies}, $G$ would contain at least
		\[(1-\xi)^h\xi^{{h\choose2}}\abs{D_1}\cdots\abs{D_h}
		> (1-\xi)^h\xi^{{h\choose2}}\Lambda_{h,1}\cdots\Lambda_{h,h}d^h
		\ge \kappa d^h\ge\ind(G)\]
		copies of $H$, a contradiction.
		Thus $t<h$.
		
		Let $S$ be the set of vertices $u$ in $L$ with the property that for every $i\in[t]$, $u$ has at least $2\xi\abs{D_i}$ neighbors in $D_i$ if $v_{t+1}v_i\in E(H)$ and at least $2\xi\abs{D_i}$ nonneighbors in $D_i$ if $v_{t+1}v_i\nin E(H)$.
		Then there is~a~partition $L\setminus S=L_1\cup\cdots\cup L_t$
		such that $L_i$ is $2\xi$-tight to $D_i$ for all $i\in[t]$.
		We shall prove that $S$ satisfies the lemma;
		and the following crucial claim is the key step.
		\begin{claim}
			\label{claim:main}
			$\abs{S}\le d$.
		\end{claim}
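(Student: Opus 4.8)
Suppose for contradiction that $\abs{S}>d$. I will build an $(m',n',t+1)$-partition of $V(G)$, with $m'=m+t\le{t+1\choose2}$ and $n'\le(t+1)\phi(\delta',\eta')$, contradicting the maximality of $t$. The engine behind everything is the observation that $\ind(G[S])\le\ind(G)\le\kappa d^h<\kappa\abs{S}^h$, and $\kappa$ sits below every threshold we shall invoke, so all the density machinery of Section 2 is available on $G[S]$ and on its large subsets.

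First I would apply \cref{cor:niki1} to $G[S]$, obtaining an $\vep'$-restricted set $S_0\subset S$ of size exactly $\ceil{\delta'\abs S}$; this delivers simultaneously the lower bound $\abs{S_0}\ge\delta'\abs S>\delta' d$ and, when $\abs S\ge2$, the upper bound $\abs{S\setminus S_0}\ge\tfrac12\abs S$ --- and it is precisely for this upper bound (needed to keep \cref{cor:niki2} applicable on $G[S\setminus S_t]$ later) that we use \cref{cor:niki1} rather than \cref{thm:niki}. Since $S\subset L$ is nonempty, the $(m,n,t)$-partition gives $\abs{D_i}>2\eta^{-1}\abs L\ge2\eta^{-1}\abs S$, so the $D_i$ are large compared with $\abs S$. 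I would then construct a nested chain $S_0\supset S_1\supset\cdots\supset S_t$ and sets $P_i\subset D_i$ by processing $i=1,\dots,t$ in turn: the definition of $S$ forces at least $2\xi\abs{S_{i-1}}\abs{D_i}$ edges (if $v_{t+1}v_i\in E(H)$) or non-edges (otherwise) between $S_{i-1}$ and $D_i$, so \cref{lem:full} --- applied to $G$ or $\overline G$ with the parameters packaged into the $\lambda_{t,i}$ and $\Gamma_{t,i}$ --- returns $S_i\subset S_{i-1}$ and $P_i'\subset D_i$, each a prescribed fraction of its host, with $(S_i,P_i')$ being $(\tfrac13\vep_{t+1}\Gamma_{t,i},\xi)$-full or $(\tfrac13\vep_{t+1}\Gamma_{t,i},\xi)$-empty; replacing $P_i'$ by a subset $P_i$ with $\lambda_{t,i}\abs{D_i}\le\abs{P_i}\le\tfrac12\abs{D_i}$ costs at most a factor $2$ in the fullness/emptiness, which the $\tfrac13$ absorbs. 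At the end $S_t$ is $\vep_{t+1}$-restricted (here $\vep'\le\vep_{t+1}\Gamma_{t,0}$ is used), each $P_i$ is $\vep_{t+1}$-restricted (via $\vep_t=\vep_{t+1}\lambda_{t,0}$), every pair $(S_t,P_i)$ is at least $(\vep_{t+1},\xi)$-full/empty, and every inherited pair $(P_i,P_j)$ keeps the behaviour of $(D_i,D_j)$; thus $\{P_1,\dots,P_t,S_t\}$ is an $(\vep_{t+1},\xi)$-blowup of $H[\{v_1,\dots,v_{t+1}\}]$, and the recursions for $\Gamma$ and $\Lambda$ yield $\abs{P_i}>\Lambda_{t+1,i}d$ for $i\in[t]$ and $\abs{S_t}>\Lambda_{t+1,t+1}d$.

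Next I would reassemble the pieces. For each $i\in[t]$ the set $L_i\subset L$ is $2\xi$-tight to $D_i$, hence (since $\abs{D_i\setminus P_i}\ge\tfrac12\abs{D_i}$ and $4\xi=\theta$) $\theta$-tight to $D_i\setminus P_i$; moreover $\abs{L_i}\le\abs L<\tfrac\eta2\abs{D_i}\le\eta\abs{D_i\setminus P_i}$, and $D_i\setminus P_i$ is $\vep$-restricted because $\vep_t<\tfrac13\vep$ when $t<h$. So each pair $(D_i\setminus P_i,L_i)$ may be appended to the first row, giving $m'=m+t\le{t+1\choose2}$. Finally $\abs{S\setminus S_t}\ge\abs{S\setminus S_0}\ge\tfrac12\abs S>\tfrac12 d$, so $\ind(G[S\setminus S_t])\le\kappa d^h<2^h\kappa\abs{S\setminus S_t}^h\le\kappa_{\ref{cor:niki2}}(H,\vep,\eta')\abs{S\setminus S_t}^h$, and \cref{cor:niki2} produces $L'\subset S\setminus S_t$ with $\abs{L'}\le\eta'\abs{S\setminus S_t}\le\eta'\abs S$ such that $G[(S\setminus S_t)\setminus L']$ splits into at most $\phi(\delta',\eta')$ $\vep$-restricted sets, which go to the second row; the choice $\eta'=\tfrac12\eta\delta'\min_t\Gamma_{t,0}$ makes $2\eta^{-1}\abs{L'}$ smaller than every $\abs{P_i}$ and $\abs{S_t}$, so $L'$ is an admissible fourth row. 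Then $A_1,\dots,A_m$, the sets $D_1\setminus P_1,\dots,D_t\setminus P_t$, and the new sets form the $\vep$-restricted pieces, $\{P_1,\dots,P_t,S_t\}$ the blowup, and $L'$ the leftover: an $(m',n',t+1)$-partition --- contradiction. (If $\abs S\le1$ then $d<1$, so $\ind(G)=0$, $S_0=S_t=S$, $S\setminus S_t=\emptyset$, the last step is vacuous, and all the size bounds hold trivially.)

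\textbf{The main obstacle} is not any single estimate --- each is routine --- but making all the constants fit at once. The three competing demands are: (i) the $\vep$-restriction level may degrade only by the factors built into $\vep'$, $\vep_t$, $\Gamma_{t,i}$, $\lambda_{t,i}$ as one passes from $D_i$ to $P_i$ and from $S_0$ to $S_t$; (ii) the new blowup sets must stay above $\Lambda_{t+1,i}d$ in size, which forces the $\Lambda$'s to record the cumulative shrinkage across all $h$ potential levels, and in turn forces $\kappa$ to be defined through $\Lambda_{h,1}\cdots\Lambda_{h,h}$ so that actually reaching $t=h$ would overproduce copies of $H$ via \cref{cor:copies}; and (iii) \cref{cor:niki2} must stay applicable to $G[S\setminus S_t]$, which is exactly why the two-sided control of $\abs{S_0}$ from \cref{cor:niki1} is needed. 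Checking that the displayed definitions of $\xi,\vep',\delta',\eta',N,\kappa$ together with the recursions for $\Gamma,\lambda,\Lambda$ meet all three simultaneously is the real content of the argument.
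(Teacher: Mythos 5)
Your proposal is correct and follows essentially the same route as the paper's own proof: assume $\abs S>d$, extract an $\vep'$-restricted $S_0\subset S$ of size $\ceil{\delta'\abs S}$ via \cref{cor:niki1} (using precisely the two-sided size control you emphasize), build the chain $S_0\supset\cdots\supset S_t$ with companion sets $P_i\subset D_i$ via \cref{lem:full}, peel off $\vep$-restricted pieces from $S\setminus S_t$ with \cref{cor:niki2}, and reassemble into an $(m+t,n+s,t+1)$-partition contradicting the maximality of $t$. The minor reorganizations (stating the contradiction target up front, folding the $\abs S\le1$ degenerate case into a closing parenthetical) do not change the argument, and your accompanying discussion of the three competing constraints correctly identifies why the constants are set up as they are.
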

		\begin{subproof}
			Suppose that $\abs{S}>d$; then $\abs{S}\ge1$.
			Since $\delta'=\delta_{\ref{cor:niki1}}(H,\vep')$
			and
			\[
			\ind(G[S])\le \kappa d^h<\kappa_{\ref{cor:niki1}}(H,\vep')\cdot \abs{S}^h\]
			by the definitions of $\delta'$ and $\kappa$,
			\cref{cor:niki1} yields an $\vep'$-restricted subset $S_0$ of $S$ with $\abs{S_0}=\ceil{\delta'\abs{G}}$;
			in particular $\abs{S_0}=1$ if $\abs{S}=1$
			and $\abs{S\setminus S_0}\ge\frac{1}{2}\abs{S}>\frac{1}{2}d$ if $\abs{S}\ge2$.
			We shall define a chain of sets $S_0\supset S_1\supset\ldots\supset S_t$
			together with sets $P_1,\ldots,P_t$ of vertices such that for all $i\in[t]$,
			\begin{itemize}
				\item $\abs{S_i}\ge\lambda_{t,i}\abs{S_{i-1}}$;
				
				\item $P_i\subset D_i$ and 
				$\lambda_{t,i}\abs{D_i}\le\abs{P_i}\le\frac{1}{2}\abs{D_i}$; and
				
				\item $(S_i,P_i)$ is $(\vep_{t+1}\Gamma_{t,i},\xi)$-full if $v_iv_{t+1}\in E(H)$
				and is $(\vep_{t+1}\Gamma_{t,i},\xi)$-empty if $v_iv_{t+1}\nin E(H)$. 
			\end{itemize}
			
			To this end, assume that for $i\in[t]$, $S_{i-1}$ and $P_{i-1}$ have been defined.
			Assume $v_iv_{t+1}\in E(H)$ without loss of generality;
			then by the choice of $L$,
			every vertex of $S_{i-1}\subset S$ has at least $2\xi\abs{D_i}$ neighbors in $D_i$.
			Thus \cref{lem:full} and the definition of $\lambda_{t,i}$ yield $S_i\subset S_{i-1}$
			and $D_i'\subset D_i$ with $\abs{S_i}\ge\lambda_{t,i}\abs{S_{i-1}}$ and $\abs{D_i'}\ge\lambda_{t,i}\abs{D_i}$ 
			so that $(S_i,D_i')$ is $(\frac{1}{3}\vep_{t+1}\Gamma_{t,i},\xi)$-full.
			Let $P_i\subset D_i'$ with $\abs{P_i}=\min(\abs{D_i'},\floor{\frac{1}{2}\abs{D_i}})$;
			then since $\floor{\frac{1}{2}\abs{D_i}}\ge\frac{1}{3}\abs{D_i}\ge\lambda_{t,i}\abs{D_i}$
			(as $\abs{D_i}>2\eta^{-1}\abs{S}>1$),
			$\max(\lambda_{t,i}\abs{D_i},\frac{1}{3}\abs{D_i'})\le\abs{P_i}\le\frac{1}{2}\abs{D_i}$.
			In particular, $(S_i,P_i)$ is $(\vep_{t+1}\Gamma_{t,i},\xi)$-full.
			This defines $S_i$ and $P_i$ in the case $v_iv_{t+1}\in E(H)$;
			and similar arguments with $(\frac{1}{3}\vep_{t+1}\Gamma_{t,i},\xi)$-full replaced by $(\frac{1}{3}\vep_{t+1}\Gamma_{t,i},\xi)$-empty also define $S_i$ and $P_i$ in the case $v_iv_{t+1}\nin E(H)$.
			
			From the above construction, we see that $\abs{S_t}=1$ if $\abs{S}=1$ and $\abs{S\setminus S_t}\ge\abs{S\setminus S_0}>\frac{1}{2}d$ if $\abs{S}\ge2$.
			Let $L':=\emptyset$ if the former holds;
			and if the latter holds, then since
			\[\ind(G[S\setminus S_t])
			\le \kappa d^h
			<\kappa_{\ref{cor:niki2}}(H,\vep,\eta')\cdot \abs{S\setminus S_t}^h\]
			by the definition of $\kappa$,
			\cref{cor:niki2} yields $L'\subset S\setminus S_t$ with $\abs{L'}\le\eta'\abs{S\setminus S_t}$
			such that $S\setminus(S_t\cup L')$ is $(\phi(\delta',\eta'),\vep)$-restricted.
			Thus there is always a subset $L'\subset S\setminus S_t$ with $\abs{L'}\le\eta'\abs{S\setminus S_t}$
			such that $S\setminus(S_t\cup L')$ has a partition into nonempty $\vep$-restricted subsets $Q_1,\ldots,Q_s$ for some $s\le\phi(\delta',\eta')$.
			
			Now, let $P_{t+1}:=S_t$; we shall prove that the following partition of $V(G)$
			\begin{gather*}
				(A_1,B_1),\ldots,(A_m,B_m),
				(D_1\setminus P_1,L_1),\ldots,(D_t\setminus P_t,L_t);\\
				C_1,\ldots,C_n,Q_1,\ldots,Q_s;\\
				P_1,\ldots,P_t,P_{t+1};\\
				L'
			\end{gather*}
			is an $(m+t,n+s,t+1)$-partition in $G$,
			which contradicts the maximality of $t$.
			To this end, observe the following.
			\begin{itemize}
				\item Since $m\le{t\choose2}$ and $n\le t\cdot\phi(\delta',\eta')$,
				we have $m+t\le{t+1\choose2}$
				and $n+s\le (t+1)\cdot\phi(\delta',\eta')$.
				
				\item $A_1,\ldots,A_m,C_1,\ldots,C_n,Q_1,\ldots,Q_s$ are nonempty and $\vep$-restricted by definition;
				and for every $i\in[t]$, $\abs{D_i\setminus P_i}\ge\frac{1}{2}\abs{D_i}>0$ from the construction of $P_i$,
				in particular $D_i\setminus P_i$ is $\vep$-restricted
				since $D_i$ is $\vep_t$-restricted and $2\vep_t\le\vep$.
				
				\item For every $i\in[m]$, $\abs{B_i}\le\eta\abs{A_i}$ and $B_i$ is $\theta$-tight to $A_i$ by definition;
				and for every $i\in[t]$,
				$\abs{L_i}\le\abs{L}<\frac{1}{2}\eta\abs{D_i}\le\eta\abs{D_i\setminus P_i}$
				and $L_i$ is $\theta$-tight to $D_i\setminus P_i$
				since it is $2\xi$-tight to $D_i$ while $\xi=\frac{1}{4}\theta$.
				
				\item $\{P_1,\ldots,P_t\}$ is an $(\vep_{t+1},\xi)$-blowup of $H[\{v_1,\ldots,v_t\}]$ since $\abs{P_i}\ge\lambda_{t,i}\abs{D_i}
				\ge\vep_t\vep_{t+1}^{-1}\abs{D_i}$ for all $i\in[t]$
				and $\{D_1,\ldots,D_t\}$ is an $(\vep_t,\xi)$-blowup of $H[\{v_1,\ldots,v_t\}]$.
				Also,
				from the above construction, for all $i\in\{0,1,\ldots,t\}$, we have
				$\abs{P_{t+1}}=\abs{S_t}\ge(\lambda_{t,t}\lambda_{t,t-1}\cdots\lambda_{t,{i+1}})\abs{S_{i}}
				=\Gamma_{t,i}\abs{S_i}$,
				in particular $(P_i,P_{t+1})$ is $(\vep_{t+1},\xi)$-full if $v_iv_{t+1}\in E(H)$ and is $(\vep_{t+1},\xi)$-empty if $v_iv_{t+1}\nin E(H)$.
				Thus $\{P_1,\ldots,P_t,P_{t+1}\}$ is an $(\vep_{t+1},\xi)$-blowup of $H[\{v_1,\ldots,v_t,v_{t+1}\}]$.
				
				\item 
				For every $i\in[t]$,
				since $\Lambda_{t+1,i}=\lambda_{t,i}\Lambda_{t,i}$,
				$\abs{L'}\le\eta'\abs{S\setminus S_t}<\eta'\abs{S}\le\eta'\abs{L}$
				as $\abs{S_t}>0$,
				and $\eta'\le\Gamma_{t,0}<\lambda_{t,i}$ by the definition of $\eta'$, we have
				\[\abs{P_i}\ge\lambda_{t,i}\abs{D_i}
				>\lambda_{t,i}\max(\Lambda_{t,i}d,2\eta^{-1}\abs{L})
				\ge\max(\Lambda_{t+1,i}d,2\eta^{-1}\lambda_{t,i}(\eta')^{-1}\abs{L'})
				\ge\max(\Lambda_{t+1,i}d,2\eta^{-1}\abs{L'});\]
				and since $\Lambda_{t+1,t+1}=\delta'\Gamma_{t,0}$
				and $\eta'\le \frac{1}{2}\eta\delta'\Gamma_{t,0}$
				by definition,
				we deduce that
				\[\abs{P_{t+1}}
				=\abs{S_t}
				\ge \Gamma_{t,0}\abs{S_0}
				\ge \Gamma_{t,0}\delta'\abs{S}
				>\max(\Lambda_{t+1,t+1}d,\Gamma_{t,0}\delta'(\eta')^{-1}\abs{L'})
				\ge\max(\Lambda_{t+1,t+1}d,2\eta^{-1}\abs{L'}).\]
				Also, for every $i\in[t]$, $P_i$ is $\vep_{t+1}$-restricted since $D_i$ is $\vep_t$-restricted;
				and $P_{t+1}=S_t$ is $\vep_{t+1}$-restricted since $S_0$ is $\vep'$-restricted
				and $\vep'\le \vep_{t+1}\Gamma_{t,0}$ by the definition of $\vep'$.
			\end{itemize}
			
			This proves~\cref{claim:main}.
		\end{subproof}
		Now, recall that $V(G)\setminus S$ is partitioned into (possibly empty) subsets
		\[A_1,\ldots,A_m;\,B_1,\ldots,B_m;\,C_1,\ldots,C_n;\,D_1,\ldots,D_t;\,L_1,\ldots,L_t\]
		such that
		\begin{itemize}
			\item $m\le{t\choose2}$ and $n\le t\cdot \phi(\delta',\eta')$;
			
			\item $A_1,\ldots,A_m,C_1,\ldots,C_n$ are nonempty and $\vep$-restricted;
			
			\item for every $i\in[m]$, $\abs{B_i}\le\eta\abs{A_i}$ and $B_i$ is $\theta$-tight to $A_i$; and
			
			\item if $t>0$, then for every $i\in[t]$,
			$\abs{D_i}>2\eta^{-1}\abs{L_i}\ge\eta^{-1}\abs{L_i}$,
			$D_i$ is $\vep$-restricted (since it is $\vep_t$-restricted),
			and $L_i$ is $\theta$-tight to $D_i$.
		\end{itemize}
		By renumbering the above sets if necessary, we may assume there exist $q\in\{0,1,\ldots,m\}$ and $r\in\{0,1,\ldots,t\}$
		such that $B_i\ne\emptyset$ for all $i\in[q]$
		and $B_i=\emptyset$ for all $i\in[m]\setminus[q]$,
		and $L_i\ne\emptyset$ for all $i\in[r]$
		and $L_i=\emptyset$ for all $i\in[t]\setminus[r]$.
		Then the following partition of $V(G)$
		\[A_1,\ldots,A_q,D_1,\ldots,D_r;\,
		B_1,\ldots,B_q,L_1,\ldots,L_r;\,
		A_{q+1},\ldots,A_m,D_{r+1},\ldots,D_t,C_1,\ldots,C_n\]
		has the desired property, because
		$q+r\le m+t\le {t\choose2}+t={t+1\choose2}\le{h\choose2}$,
		and 
		\[(m-q)+(t-r)+n
		\le {t\choose2}+t+t\cdot \phi(\delta',\eta')
		\le {h\choose2}+(h-1)\cdot\phi(\delta',\eta')
		=N.\]
		This proves \cref{lem:main}.
	\end{proof}
	We remark that, according to~\cite[Theorem 1.4]{MR1887082}, the choice $\gamma_{\ref{lem:full}}(c,\vep)=\frac12(2\vep)^{12/c}$ is optimal up to a constant factor in the exponent.
	This choice leads to bounds on $\kappa_{\ref{lem:main}}^{-1}(H,\vep,\eta,\theta)$ and $N_{\ref{lem:main}}(H,\vep,\eta,\theta)$
	which are towers of twos of height depending on $h$ with $(\vep\eta\theta)^{-1}$ on top,
	which results in the tower-type dependence of $\kappa_{\ref{thm:copies}}^{-1}(H,\vep)$ and $N_{\ref{thm:copies}}(H,\vep)$ on $\vep^{-1}$ and $h$ mentioned in the introduction.
	
	\section{Finishing the proof}
	With \cref{lem:main} in hand,
	it now suffices to make obvious changes to~\cite[Section 3]{chudnovsky2021-1} to complete the proof of \cref{thm:copies}.
	For $\vep>0$, an integer $k\ge0$, and a graph $G$,
	a {\em $(k,\vep)$-path-partition} of $G$ is a sequence $(W_0,W_1,\ldots,W_k)$ of disjoint nonempty sets with union $V(G)$ such that for every $i\in\{0,1,\ldots,k-1\}$,
	\begin{itemize}
		\item $W_i$ is $\vep$-restricted in $G$;
		
		\item $\abs{W_i}\ge 12\abs{W_k}$; and
		
		\item $W_{i+1}\cup\cdots\cup W_k$ is $\frac{1}{12}\vep$-tight to $W_i$.
	\end{itemize}
	
	We need the following result from~\cite[Theorem 3.3]{chudnovsky2021-1}.
	\begin{lemma}
		[\cite{chudnovsky2021-1}]
		\label{lem:partition}
		For all $\vep\in(0,\frac{1}{3})$,
		every graph with a $(\ceil{4\vep^{-1}},\frac{1}{4}\vep)$-path-partition is $(2400\vep^{-2},\vep)$-restricted.
	\end{lemma}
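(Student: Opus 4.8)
The goal is to turn a $(\ceil{4\vep^{-1}},\tfrac14\vep)$-path-partition $(W_0,\dots,W_k)$ of $G$, where $k=\ceil{4\vep^{-1}}$, into a partition of $V(G)$ into at most $2400\vep^{-2}$ subsets, each $\vep$-restricted in $G$. The first point is that each of $W_0,\dots,W_{k-1}$ is already $\tfrac14\vep$-restricted, hence $\vep$-restricted, so these supply $k\le 4\vep^{-1}+1$ of the parts and the whole difficulty is the residual set $W_k$. The hypotheses give two handles on it: since $|W_i|\ge 12|W_k|$ for each of the $k\ge 4\vep^{-1}$ indices $i<k$, we get $|W_k|<\tfrac{\vep}{48}|V(G)|$; and since $W_k\subseteq W_{i+1}\cup\cdots\cup W_k$ for each such $i$, every vertex of $W_k$ has fewer than $\tfrac{\vep}{48}|W_i|$ neighbours in $W_i$, or fewer than $\tfrac{\vep}{48}|W_i|$ non-neighbours in $W_i$, according to whether $W_{i+1}\cup\cdots\cup W_k$ is $\tfrac{\vep}{48}$-sparse or $\tfrac{\vep}{48}$-dense to $W_i$.

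Next I would classify each $i<k$ by a pair: whether $W_{i+1}\cup\cdots\cup W_k$ is $\tfrac{\vep}{48}$-sparse or $\tfrac{\vep}{48}$-dense to $W_i$ (the ``outer type'' of $i$), and whether $G[W_i]$ or $\overline G[W_i]$ has maximum degree below $\tfrac14\vep|W_i|$ (the ``inner type''). This breaks $\{0,\dots,k-1\}$ into four classes. For a class $J$ whose outer and inner types are both sparse, set $U_J:=\bigcup_{i\in J}W_i$: a vertex $v\in W_i$ has fewer than $\tfrac14\vep|W_i|$ neighbours in $W_i$ and, because $v$ lies in $W_{j+1}\cup\cdots\cup W_k$ for every $j\in J$ with $j<i$, at most $\sum_{j\in J,\ j<i}\tfrac{\vep}{48}|W_j|\le\tfrac{\vep}{48}|U_J|$ neighbours in the earlier blocks; its neighbours in the later blocks of $J$ are bounded only on average, so I would delete from each $W_i$ the set $B_i$ of vertices with more than $\tfrac{\vep}{8}|U_J|$ such ``forward'' neighbours, which by a Markov bound has size at most $\tfrac16|W_i|$. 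Every surviving vertex of $U_J$ then has fewer than $(\tfrac14+\tfrac1{48}+\tfrac18)\vep|U_J|$ neighbours inside the survivor set, whose size is at least $\tfrac56|U_J|$, so $U_J\setminus\bigcup_iB_i$ is one $\vep$-restricted part. The both-dense class is handled by taking complements, and for the two mixed classes I would just keep the individual $W_i$ (at most $k$ of them) as separate parts; this is all comfortably within the budget.

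The genuine obstacle is to absorb $W_k$ together with the discarded vertices $\bigcup_iB_i$. A vertex of $W_k$ is $\tfrac{\vep}{48}$-sparse to the sparse-class part and $\tfrac{\vep}{48}$-dense to the dense-class part, so its edges to the old vertices of those parts are harmless; the only danger is that vertices of $W_k$ appended to the sparse part may be mutually adjacent (or those appended to the dense part mutually non-adjacent) in large numbers. So the plan is to split $W_k$ into a bounded number of pieces, each $\vep$-sparse or $\vep$-dense, append each to the matching part, and spread these pieces and the $B_i$ over the $\ge k$ parts built so far so that — using $|W_k|<\tfrac{\vep}{48}|V(G)|$ and $k\ge 4\vep^{-1}$ — no part gains more than a $\tfrac{\vep}{24}$-fraction of new vertices, which keeps every part $\vep$-restricted. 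Producing the bounded decomposition of $W_k$ is the crux: the three path-partition conditions do not by themselves bound $W_k$'s internal complexity, so this must come either from a stronger form of the path-partition hypothesis than is written here or from re-entering the outer argument on $G[W_k]$; pinning this down, and then checking every constant (with $\vep<\tfrac13$ ensuring the relevant sums stay below $1$), is where most of the work — and the quadratic $2400\vep^{-2}$ — goes.
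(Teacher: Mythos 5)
The paper imports this lemma from \cite{chudnovsky2021-1} (where it is Theorem 3.3) without reproving it, so there is no in-paper argument to compare against; I can only judge your attempt on its own terms. Your preliminaries are correct: the bound $\abs{W_k}<\frac{\vep}{48}\abs{V(G)}$, the classification of indices $i<k$ into four (inner type, outer type) classes, and the Markov deletion of $B_i$ making the merged (sparse, sparse) class $\vep$-restricted all check out. But you correctly sense --- and then misdiagnose --- that the argument does not close. The hypothesis is not too weak, and you do not in fact need $W_k$ to decompose into boundedly many $\vep$-sparse or $\vep$-dense pieces: any subset of $W_k$ that is a small enough fraction of a much larger host part is harmless regardless of its internal structure.

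The genuine gap is that you have no host parts to append $W_k$ to. If every index $i<k$ is of mixed type --- for instance $G[W_i]$ nearly complete (inner dense) while $W_{i+1}\cup\cdots\cup W_k$ is $G$-sparse to $W_i$ (outer sparse) --- then both matched classes are empty, every surviving part is a single $W_i$, and appending even one vertex $v\in W_k$ to any $W_i$ destroys $\vep$-restrictedness: $v$ has almost all of $W_i$ as $G$-non-neighbours while the vertices of $W_i$ have almost all of $W_i$ as $G$-neighbours, so neither $G[W_i\cup\{v\}]$ nor $\overline G[W_i\cup\{v\}]$ has small maximum degree. Your spreading step has nowhere to spread. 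A second, independent problem is that $\bigcup_i B_i$ can have size $\Theta(\abs{V(G)})$, far too large to be distributed as an $O(\vep)$-fraction surcharge over $\le 2400\vep^{-2}$ parts. To prove the lemma one must build parts of a different shape --- parts that meet many of the $W_i$ each in a small proportion, so that the $W_i$-internal degree contributions become negligible relative to the part size and the cross-$W_i$ edges are controlled by the tightness conditions --- rather than parts that coincide with whole blocks $W_i$; that construction, and not a decomposition of $W_k$, is what produces the $\vep^{-2}$ in the bound, and it is the idea your approach is missing.
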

	The following lemma allows us to \dd lengthen\ee{} a given path-partition of length less than $\ceil{4\vep^{-1}}$ by one,
	at the cost of removing a small number of vertices.
	\begin{lemma}
		\label{lem:key}
		Let $\vep\in(0,\frac{1}{3})$ and $K:=\ceil{4\vep^{-1}}$.
		Let $H$ be a graph with $h:=\abs H\ge2$.
		Let
		\begin{gather*}
			\vep':=h^{-2K}\vep,
			\qquad\eta:=h^{-2},
			\qquad\theta:=\frac{1}{12}h^{-2K}\vep,\\
			\kappa=\kappa_{\ref{lem:key}}(H,\vep)
			:=h^{-2Kh}\cdot\kappa_{\ref{lem:main}}(H,\vep',\eta,\theta),
			\qquad
			N=N_{\ref{lem:key}}(H,\vep):=N_{\ref{lem:main}}(H,\vep',\eta,\theta).
		\end{gather*} 
		Let $k$ be an integer with $0\le k\le K$.
		Let $d\ge0$, and let $G$ be a graph with $\ind(G)\le \kappa d^h$
		such that $G$ has a  $(k,h^{2(k-K)}\vep)$-path-partition $(W_0,W_1,\ldots,W_k)$.
		Then there is a set $S\subset V(G)$ with $\abs{S}\le h^{-2k}d$ such that $G\setminus S$ is $(h^{2(K-k)}(2400\vep^{-2}+N)-N,\vep)$-restricted.
	\end{lemma}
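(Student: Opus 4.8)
The plan is to prove \cref{lem:key} by induction on $K-k$ (equivalently, by downward induction on $k$ from $K$ to $0$): at each step one \dd lengthens\ee{} the given path-partition by one, using \cref{lem:main} on its leftover part at the cost of deleting a few vertices, and then recurses. For the base case $k=K$ the hypothesis supplies a path-partition of length $K=\ceil{4\vep^{-1}}$, so \cref{lem:partition} shows that $G$ itself is $(2400\vep^{-2},\vep)$-restricted; take $S=\emptyset$ and note $h^{2(K-k)}(2400\vep^{-2}+N)-N=2400\vep^{-2}$.

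For the inductive step, let $0\le k<K$, suppose \cref{lem:key} holds with $k+1$ in place of $k$, write the given $(k,h^{2(k-K)}\vep)$-path-partition of $G$ as $(W_0,\dots,W_k)$, and set $d_0:=h^{-2(k+1)}d$. Since $k+1\le K$ we have
\[\ind(G[W_k])\le\ind(G)\le\kappa d^h=h^{-2Kh}\cdot\kappa_{\ref{lem:main}}(H,\vep',\eta,\theta)\cdot d^h\le\kappa_{\ref{lem:main}}(H,\vep',\eta,\theta)\cdot d_0^h,\]
so \cref{lem:main} applied to $G[W_k]$ (with parameters $\vep',\eta,\theta$) returns $S_0\subset W_k$ with $\abs{S_0}\le d_0$ and a partition of $W_k\setminus S_0$ into $A_1,\dots,A_m$, $B_1,\dots,B_m$, $C_1,\dots,C_n$ with $m\le{h\choose2}$, $n\le N$, the $A_i$'s and $C_j$'s $\vep'$-restricted, and $\abs{B_i}\le\eta\abs{A_i}$ with $B_i$ $\theta$-tight to $A_i$ for all $i$. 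Following the argument of~\cite[Section 3]{chudnovsky2021-1}, one rearranges this output: the $C_j$'s and all but one $A_i$ become $\vep$-restricted classes of the final partition (each is $\vep'$-restricted with $\vep'\le\vep$, in an induced subgraph of $G$); after deleting a further small set into the exceptional set, the remaining vertex set $V'$ carries a $(k+1,h^{2(k+1-K)}\vep)$-path-partition whose first $k$ sets are $W_0,\dots,W_{k-1}$ and whose new set is one of the $A_i$'s. Its defining properties are checked against the choices of $\vep',\eta,\theta$: restrictedness from $\vep'=h^{-2K}\vep\le h^{2(k+1-K)}\vep$; the size bound from $\abs{W_i}\ge12\abs{W_k}$ for $i<k$ together with $\abs{B_i}\le\eta\abs{A_i}$; and the tightness of the new tail to $W_i$ ($i<k$) from that of $W_k$, and to the new set from $\theta=\tfrac1{12}h^{-2K}\vep\le\tfrac1{12}h^{2(k+1-K)}\vep$. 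As $\ind(G[V'])\le\ind(G)\le\kappa d^h$, the inductive hypothesis applied to $G[V']$ with the same $d$ yields $S_1\subset V'$ with $\abs{S_1}\le h^{-2(k+1)}d$ such that $G[V']\setminus S_1$ is $(h^{2(K-k-1)}(2400\vep^{-2}+N)-N,\vep)$-restricted.

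Taking $S$ to be $S_0$, $S_1$, and the small extra deleted set combined, one gets $\abs{S}\le h^{-2k}d$ (the constants being arranged so that the two contributions $\approx h^{-2(k+1)}d$ sum to at most $h^{-2k}d$, which needs only $h\ge2$), and $G\setminus S$ is partitioned into the $\le h^{2(K-k-1)}(2400\vep^{-2}+N)-N$ restricted classes of $G[V']\setminus S_1$ together with $C_1,\dots,C_n$ and the set-aside $A_i$'s, hence into at most $\big(h^{2(K-k-1)}(2400\vep^{-2}+N)-N\big)+N+\big({h\choose2}-1\big)\le h^{2(K-k)}(2400\vep^{-2}+N)-N$ classes (using $h^2\ge2$ and $N\ge{h\choose2}$); each is $\vep$-restricted in $G$ since it is so in an induced subgraph. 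This closes the induction. The part I expect to be most delicate is the bookkeeping around the exceptional set: \cref{lem:main} must be invoked with a budget $d_0$ small enough that $\abs{S_0}$ plus any cleanup plus the recursively deleted $\abs{S_1}$ stay within $h^{-2k}d$, yet large enough that $\ind(G[W_k])\le\kappa_{\ref{lem:main}}d_0^h$ still holds — it is this tension that pins down $\vep'=h^{-2K}\vep$, $\eta=h^{-2}$, $\theta=\tfrac1{12}h^{-2K}\vep$, and $\kappa=h^{-2Kh}\kappa_{\ref{lem:main}}$ — and, granted~\cite{chudnovsky2021-1}, checking that \cref{lem:main}'s bounded (but multi-pair) output can be fitted into a single lengthening of the path-partition without violating any of its restrictedness, tightness, size, class-count, or deletion constraints.
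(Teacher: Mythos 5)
Your proposal correctly identifies the shape of the argument (backward induction on $k$, base case via \cref{lem:partition}, \cref{lem:main} applied to $G[W_k]$ with budget $h^{-2(k+1)}d$), but the inductive step as you describe it has a genuine gap: it applies the inductive hypothesis only \emph{once}, to a single set $V'$ obtained by lengthening the path-partition with a single chosen pair $(A_{i_0},B_{i_0})$, and proposes to discard the unchosen $B_i$ into the exceptional set. Those $B_i$ cannot be discarded: \cref{lem:main} only guarantees $\abs{B_i}\le\eta\abs{A_i}=h^{-2}\abs{A_i}$, and $\abs{A_i}$ is controlled by $\abs{W_k}$, not by $d$, so $\sum_{i\ne i_0}\abs{B_i}$ can be of order $h^{-2}\abs{G}$, vastly exceeding the allotted budget $h^{-2k}d$. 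They also cannot be kept as $\vep$-restricted classes (they are only tight to $A_i$, with no control on their own density), nor absorbed into $A_i$ (a vertex of $B_i$ can have degree $\approx\eta\abs{A_i}=h^{-2}\abs{A_i}$ inside $A_i\cup B_i$, which is not $\le\vep\abs{A_i\cup B_i}$ when $\vep\ll h^{-2}$). This is not a bookkeeping detail: finding a home for these sets is the whole point of the path-partition machinery.

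The paper's actual step is different in exactly this place. Having obtained $T$ and the pairs $(A_1,B_1),\dots,(A_m,B_m)$ together with $C_1,\dots,C_n$, it does \emph{not} set aside $m-1$ of the $A_i$'s. Instead it splits each earlier layer $W_i$ $(i<k)$ into $m$ roughly equal pieces $W_i^1,\dots,W_i^m$, forms $U_j:=\bigcup_{i<k}W_i^j\cup A_j\cup B_j$, verifies that $(W_0^j,\dots,W_{k-1}^j,A_j,B_j)$ is a $(k{+}1,h^{2(k+1-K)}\vep)$-path-partition of $G[U_j]$ for every $j\in[m]$, and applies the inductive hypothesis $m$ times in parallel, once per $j$. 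Every $B_j$ is thereby absorbed into its own subproblem and handled recursively; the deletion budget works because $(m+1)h^{-2(k+1)}d\le h^{-2k}d$ as $m\le\binom{h}{2}\le h^2-1$; and the class count picks up the factor $m\le h^2$, which is precisely why the bound grows like $h^{2(K-k)}$ rather than the linear-in-$(K-k)$ growth your single recursion would give. That your target bound is off by a factor exponential in $K-k$ from what a single recursion would produce is itself a hint that one branch per level cannot be what is intended. So you would need to replace the one-branch lengthening with the $m$-way split of $W_0,\dots,W_{k-1}$ and the $m$ parallel inductive calls to make the argument go through.
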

	\begin{proof}
		We proceed by backward induction on $k$.
		If $k=K$ then the conclusion follows by~\cref{lem:partition}.
		We may assume that $k<K$ and that the lemma holds for $k+1$.
		Since 
		\[\ind(G[W_k])\le\kappa d^h
		\le \kappa_{\ref{lem:main}}(H,\vep',\eta,\theta)\cdot (h^{-2(k+1)}d)^h,\]
		by~\cref{lem:main} applied to $G[W_k]$ with $d$ replaced by $h^{-2(k+1)}d$,
		there is $T\subset W_k$ with $\abs T\le h^{-2(k+1)}d$ such that $W_k\setminus T$ can be partitioned into nonempty sets
		\[A_1,\ldots,A_m,B_1,\ldots,B_m,C_1,\ldots,C_n\]
		where $m\le {h\choose2}$ and $n\le N$, such that
		\begin{itemize}
			\item $A_1,\ldots,A_m,C_1,\ldots,C_n$ are $\vep'$-restricted in $G$; and
			
			\item for every $i\in[m]$, $\abs{B_i}\le\eta\abs{A_i}$ and $B_i$ is $\theta$-tight to $A_i$.
		\end{itemize}
		
		If $m=0$ then $G\setminus T$ is $(k+N,\vep)$-restricted and we are done
		(note that $h\ge2$ and $k\le 4\vep^{-2}$);
		thus we may assume $m\ge1$.
		It follows that $\abs{W_k}\ge\abs{A_1}\ge\eta^{-1}\abs{B_1}\ge h^2\ge2m$,
		and so $\abs{W_i}\ge 12\abs{W_k}\ge 24m$ for all $i\in\{0,1,\ldots,k-1\}$.
		Thus for each such $i$, $W_i$ has a partition $W_i^1\cup\cdots\cup W_i^m$ with
		$\abs{W_i^j}\ge\floor{\frac{1}{m}\abs{W_i}}\ge\frac{1}{2m}\abs{W_i}\ge h^{-2}\abs{W_i}$ for all $j\in[m]$.
		Let $U_j:=\bigcup_{i=0}^{k-1}W_i^j\cup(A_j\cup B_j)$ for every $j\in[m]$.
		\begin{claim}
			\label{claim:key}
			For all $j\in[m]$, $(W_0^j,W_1^j,\ldots,W_{k-1}^j,A_j,B_j)$ is a $(k+1,h^{2(k+1-K)}\vep)$-path-partition of $G[U_j]$.
		\end{claim}
		\begin{subproof}
			It suffices to observe the following.
			\begin{itemize}
				\item $A_j$ is $h^{2(k+1-K)}\vep$-restricted
				since it is $\vep'$-restricted and $\vep'=h^{-2K}\vep$;
				and also, for each $i\in\{0,1,\ldots,k-1\}$,
				$W_i^j$ is $h^{2(k+1-K)}\vep$-restricted
				since $W_i$ is $h^{2(k-K)}\vep$-restricted and $\abs{W_i^j}\ge h^{-2}\abs{W_i}$.
				
				\item For every $i\in\{0,1,\ldots,k-1\}$, since $12\abs{W_k}\le\abs{W_i}\le h^2\abs{W_i^j}$, we have 
				\[12\abs{B_j}\le 12\eta\abs{A_j}
				\le\min(12h^{-2}\abs{W_k},\abs{A_j})
				\le \min(\abs{W_i^j},\abs{A_j}).\]
				
				\item $B_j$ is $\frac{1}{12}h^{2(k+1-K)}\vep$-tight to $A_j$ by the definition of $\theta$;
				and also,
				for every $i\in\{0,1,\ldots,k-1\}$,
				$(W_{i+1}^j\cup\cdots\cup W_{k-1}^j)\cup(A_j\cup B_j)$ is $\frac{1}{12}h^{2(k+1-K)}\vep$-tight to $W_i^j$
				since $W_{i+1}\cup\cdots\cup W_k$ is $\frac{1}{12}h^{2(k-K)}\vep$-tight to $W_i$
				and $\abs{W_i^j}\ge h^{-2}\abs{W_i}$.
			\end{itemize}
			
			This proves \cref{claim:key}.
		\end{subproof}
		By \cref{claim:key} and induction, for each $j\in[m]$, there is a set $S_j\subset U_j$ with $\abs{S_j}\le h^{-2(k+1)}d$ such that
		$G[U_j\setminus S_j]$ is $(h^{2(K-k-1)}(2400\vep^{-2}+N)-N,\vep)$-restricted.
		Put $S:=\bigcup_{j\in[m]}S_j\cup T$;
		then $\abs{S}\le (m+1)h^{-2(k+1)}d\le h^{-2k}d$ as $h\ge2$, and since
		\begin{align*}
			m\cdot (h^{2(K-k-1)}(2400\vep^{-2}+N)-N)+n
			&\le h^2\cdot(h^{2(K-k-1)}(2400\vep^{-2}+N)-N)+N\\
			&\le h^{2(K-k)}(2400\vep^{-2}+N)-N,
		\end{align*}
		we see that $G\setminus S$ is $(h^{2(K-k)}(2400\vep^{-2}+N)-N,\vep)$-restricted.
		This proves \cref{lem:key}.
	\end{proof}
	We are now ready to finish the proof of \cref{thm:copies}, which we restate here for the reader's convenience.
	\begin{theorem}
		\label{thm:main}
		For every $\vep>0$ and every graph $H$, there exist $\kappa=\kappa(H,\vep)>0$ and $N=N(H,\vep)>0$
		such that for every $d\ge0$ and every graph $G$ with $\ind(G)\le\kappa d^h$,
		there is a set $S\subset V(G)$ with $\abs{S}\le d$ such that $G\setminus S$ is $(N,\vep)$-restricted.
	\end{theorem}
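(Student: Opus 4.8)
The plan is to derive \cref{thm:main} (equivalently \cref{thm:copies}) from \cref{lem:key} by specializing it to $k=0$, once a few degenerate cases have been cleared away; all the real work has already been done in \cref{lem:main} and the backward induction behind \cref{lem:key}.

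\textbf{Reductions.} First I would treat $h=1$ on its own, since \cref{lem:key} assumes $h\ge2$: when $H$ is a single vertex, $\ind(G)=\abs G$, so setting $\kappa:=1$ and $N:=1$ makes the hypothesis $\ind(G)\le\kappa d^h$ read $\abs G\le d$, and then $S:=V(G)$ works because $G\setminus S$ is the null graph and hence trivially $(N,\vep)$-restricted. Second, I would reduce to $\vep\in(0,\frac13)$: if $\vep\ge\frac13$, then any set that is $\frac14$-restricted in a graph is also $\vep$-restricted, so a graph that is $(N,\frac14)$-restricted is $(N,\vep)$-restricted, and it suffices to prove the theorem with $\vep$ replaced by $\frac14$. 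Finally, the null graph is trivially $(N,\vep)$-restricted via $S=\emptyset$, so from now on I may assume $G$ is nonnull.

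\textbf{Main case.} Assume $h:=\abs H\ge2$ and $\vep\in(0,\frac13)$, and put $K:=\ceil{4\vep^{-1}}$, $\kappa:=\kappa_{\ref{lem:key}}(H,\vep)$, and $N:=h^{2K}(2400\vep^{-2}+N_{\ref{lem:key}}(H,\vep))-N_{\ref{lem:key}}(H,\vep)>0$. Let $d\ge0$ and let $G$ be nonnull with $\ind(G)\le\kappa d^h$. The one‑term sequence $(V(G))$ is a $(0,h^{-2K}\vep)$-path-partition of $G$: it is a sequence of disjoint nonempty sets with union $V(G)$, and the three defining conditions are vacuous, being indexed by $i\in\{0,\dots,k-1\}=\emptyset$. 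Hence \cref{lem:key} with $k=0$ yields a set $S\subset V(G)$ with $\abs S\le h^{-2k}d=d$ such that $G\setminus S$ is $(h^{2K}(2400\vep^{-2}+N_{\ref{lem:key}}(H,\vep))-N_{\ref{lem:key}}(H,\vep),\,\vep)$-restricted, i.e.\ $(N,\vep)$-restricted, as required. For the equivalent ``vertex-removal'' formulation, given $G$ I would set $d:=\kappa^{-1/h}\ind(G)^{1/h}$, so that $\ind(G)=\kappa d^h$, and apply the above to get $S$ with $\abs S\le d=\kappa^{-1/h}\ind(G)^{1/h}$; thus $C:=\kappa^{-1/h}$ works. (Note this recovers \cref{thm:csss} from the case $d=0$.)

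\textbf{Expected obstacle.} Honestly, there is no genuine obstacle at this final stage: it is a specialization of \cref{lem:key} plus the bookkeeping above. The only points that warrant a second look are the (vacuous) verification that a nonnull graph admits a $(0,\cdot)$-path-partition and that the displayed constant $N$ is well-defined and positive when $h\ge2$ — both of which are immediate — so the difficulty of \cref{thm:copies} is entirely concentrated upstream, in \cref{lem:main}.
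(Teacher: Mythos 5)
Your argument is correct and is essentially the same as the paper's: both specialize \cref{lem:key} to $k=0$ via the trivial one-term path-partition $(V(G))$, with only minor differences in bookkeeping (you carry the $-N_{\ref{lem:key}}$ term and spell out the reductions to $h\ge2$, $\vep<\frac13$, and $G$ nonnull, which the paper leaves implicit).
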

	\begin{proof}
		We may assume $h\ge2$.
		Let $\kappa:=\kappa_{\ref{lem:key}}(H,\vep)$
		and $N:=h^{2K}(2400\vep^{-2}+N_{\ref{lem:key}}(H,\vep))$.
		Then \cref{thm:main} follows from \cref{lem:key} applied to the $(0,h^{-2K}\vep)$-partition $V(G)$ of $G$.
	\end{proof}
	\hypersetup{bookmarksdepth=-1}
	\subsection*{Acknowledgements}
	We would like to thank Alex Scott and Paul Seymour for stimulating discussions.
	We are also grateful to the anonymous referees for helpful comments, in particular to a referee who informed us about~\cite{MR1887082}.
	\hypersetup{bookmarksdepth=1}
	\bibliographystyle{abbrv}

\end{document}